\documentclass[
]{amsart}
\usepackage{algorithm, algorithmic}
\usepackage{multirow} 
\usepackage{amsmath}\usepackage{amsthm}\usepackage{amssymb}\usepackage{mathrsfs}\usepackage{amscd}\usepackage{graphicx}\usepackage{subfigure}\usepackage{amsfonts}\usepackage{amsxtra}\usepackage{color}
\usepackage{amscd}





\DeclareMathOperator{\rank}{rank}

\DeclareMathOperator{\trace}{tr}

\DeclareMathOperator{\diag}{diag}\DeclareMathOperator*{\spann}{span}\DeclareMathOperator{\supp}{supp}

\DeclareMathOperator{\Pol}{Pol}

\newcommand{\RR}{\mathbb{R}}

\theoremstyle{definition}
\newtheorem{definition}{Definition}[section]
\newtheorem{example}[definition]{Example}\newtheorem{remark}[definition]{Remark}
\theoremstyle{plain}\newtheorem{theorem}[definition]{Theorem}\newtheorem{corollary}[definition]{Corollary}\newtheorem{proposition}[definition]{Proposition}

\newcommand{\R}{\mathbb{R}}\newcommand{\N}{\mathbb{N}}





\begin{document}

\title[Moment fusion]{From low to high-dimensional moments \\ without magic}

\author[B.~G.~Bodmann]{Bernhard G.~Bodmann}
\address[B.~G.~Bodmann]{University of Houston, 
Department of Mathematics, 
Houston, TX 77204-3008
}
\email{bgb@math.uh.edu}

\author[M.~Ehler]{Martin Ehler}
\address[M.~Ehler]{University of Vienna,
Department of Mathematics,
Oskar-Morgenstern-Platz 1
A-1090 Vienna
}
\email{martin.ehler@univie.ac.at}

\author[M.~Gr\"af]{Manuel Gr\"af}
\address[M.~Gr\"af]{University of Vienna,
Department of Mathematics,
Oskar-Morgenstern-Platz 1
A-1090 Vienna
}
\email{manuel.graef@univie.ac.at}


%
\begin{abstract}
We aim to compute the first few moments of a high-dimensional random vector from the first few moments of a number of its low-dimensional projections. To this end, we identify algebraic conditions on the set of low-dimensional projectors that yield explicit reconstruction formulas. We also provide a computational framework, with which suitable projectors can be derived by solving an optimization problem. Finally, we show that randomized projections permit approximate recovery.  
\end{abstract}



\maketitle


\section{Introduction}
A central problem in dimension reduction, distributed sensing, and many statistical applications is the identification of properties of a high-dimensional random vector from knowledge of marginal distributions, i.e., the distributions of one or more lower-dimensional projections of the random vector. A simple example in statistics is the problem of computing its lowest moments. However, knowledge of some marginal distributions may not be sufficient to identify the first few high-dimensional moments. Here, we shall address the problem of designing low-dimensional projections of the random vector, so that its high-dimensional moments can be computed from the lower dimensional ones by an explicit formula.

We consider a random vector $X$ in $\R^d$, distributed according to some Borel probability measure. In practice, $X$ could be a random signal that is observed by 
distributed sensors, each measuring a certain piece of information. Inspired by \cite{Casazza:2008aa,Kutyniok:2009aa}, each sensor is modeled as a matrix $Q_j\in\R^{k\times d}$ with full rank $k< d$. 
Computing with $P_j:=Q_j^*(Q_jQ_j^*)^{-1}Q_j$ instead of $Q_j$, we can effectively turn our measurement matrices into orthogonal projectors $\{P_j\}_{j=1}^n\subset\mathcal{G}_{k,d}$, where $\mathcal{G}_{k,d}$ denotes the set of orthogonal projectors on $\R^d$ with rank $k$, i.e., $P_j$ is the orthogonal projector onto the row-space of $Q_j$. A variant of the Cram\'er-Wold Theorem says that two random vectors $X,Y\in\R^d$ are identically distributed if and only if, for \emph{all} $P\in\mathcal{G}_{k,d}$, the two random vectors $PX, PY$ are identically distributed, cf.~\cite{Renyi:1952mw}. For further related results on projected distributions, we refer to \cite{Heppes:1956xd,Gilbert:1955ts,Belisle:1997qc,J.Cuesta-Albertos:2007th}. Here, we do not wish to identify the distribution of $X$, but restrict us to
recover its first few moments. On the other hand, we want to achieve this by observing the moments of a number of
low-dimensional projections and combining the information in a process we call moment fusion. 



\subsection{Moment fusion}
Suppose $X$ is a random vector in $\R^d$ distributed according to some unknown Borel probability measure on $\R^d$. For a fixed integer $p>0$, our goal is to determine the low-order moments  
\begin{equation}\label{eq:1}
\mathbb{E}X^s, \quad s\in\N^d,\; |s|\leq p,
\end{equation}
from low-order moments of lower-dimensional projections. We use here multi-index notation $X^s=X_1^{s_1}\cdots X_d^{s_d}$ and $|s| = \sum_{j=1}^d s_j$. More specifically, we suppose that we have only access to the first $p$ moments of low-dimensional linear measurements, i.e., for certain matrices $\{Q_j\}_{j=1}^n\subset \R^{k\times d}$ with fixed rank $k<d$, we suppose that we know 
\begin{equation}\label{eq:2}
\mathbb{E} (Q_jX)^s , \quad s\in\N^k,\; |s|\leq p.
\end{equation}
From knowledge of $\{Q_j\}_{j=1}^n$ and the first $p$ moments of the dimension reduced random vectors $Q_jX$, $j=1,\ldots,n$, in \eqref{eq:2}, we aim to reconstruct the high-dimensional moments of $X$ in \eqref{eq:1}.

%

\subsection{Special examples}
Suppose that $x\in\R^d$ is a vector of unknowns. If $\{Q_j\}_{j=1}^n$ are chosen such that 
\begin{equation*}
\{(Q_jx)^{s} : j=1,\ldots,n,\; s\in\N^{k},\; |s|\leq p\}
\end{equation*}
spans the space of polynomials in $x$ of degree at most $p$, then \eqref{eq:1} can be computed from \eqref{eq:2}
by expressing each expected value $\mathbb E X^s$, $s\in\N^d$, $|s|\leq p$, in a suitable linear combination of the expected values
of $\{\mathbb E (Q_j X)^{s'}: j=1,\ldots,n,\; s'\in\N^{k},\; |s'|\leq p\ \}_{j=1}^n$. 
We provide an example for $k=1$:
\begin{example}\label{ex:1}
\begin{enumerate}
\item
If $p=1$, then we can simply choose $Q_j:=e_j^*$, $j=1,\ldots,d$, where $\{e_j\}_{j=1}^d$ is the standard orthonormal basis for $\R^d$. So, reconstruction is possible with $d$ projectors. 
\item For $p=2$, the $\{Q_j\}_{j=1}^d$ together with $Q^+_{i,j}:=e^*_i+e^*_j$, $i<j$ allow to reconstruct \eqref{eq:1} from \eqref{eq:2} with $\binom{d+1}{2}$ many low-dimensional measurements.
\item 
If $p=3$, one can check that $\{Q_j\}_{j=1}^n$ with $Q^+_{i,j}$, $Q^-_{i,j}:=e^*_i-e^*_j$, $i<j$, and $Q_{i,j,k}:=e^*_i+e^*_j+e^*_k$, $i<j<k$, allow reconstruction, so that we use $\binom{d+2}{3}$ many linear measurements. 
\item For $p=4$, we can choose $\{Q_j\}_{j=1}^n$ with $Q^+_{i,j}$, $Q^-_{i,j}$, $\tilde{Q}^+_{i,j}:=e^*_i+2e^*_j$, $i<j$, and $Q_{i,j,k}$, $Q^{-}_{i,j,k}:=e^*_i-e^*_j+e^*_k$, $\tilde{Q}^{-}_{i,j,k}:=e^*_i+e^*_j-e^*_k$, $i<j<k$, and $Q_{i,j,k,\ell}:=e^*_i+e^*_j+e^*_k+e^*_\ell$, $i<j<k<\ell$, allow reconstruction, so that we use $\binom{d+3}{4}$ many  linear measurements. 
\end{enumerate}
\end{example}
Note that the number of  linear measurements in Example \ref{ex:1} is exactly the dimension of the homogeneous polynomials of degree $p$ in $d$ variables. Similar examples can be derived for more general situations, and the following example deals with $k=2$:
\begin{example}\label{ex:2}
\begin{enumerate}
\item
If $p=1$, then the choice $\binom{e_1^*}{e_2^*}$, $\binom{e_3^*}{e_4^*},\ldots$, up to $\binom{e_{d-1}^*}{e_{d}^*}$, for $d$ even or up to $\binom{e_{d}^*}{e_{1}^*}$, for $d$ odd, enables us to reconstruct the high-dimensional mean from the lower-dimensional means.
\item For $p=2$, moment reconstruction works with the $\binom{d}{2}$ projectors $\binom{e_i^*}{e_j^*}$, for $i<j$. 
\end{enumerate}
\end{example}

\subsection{Outline and contribution of this paper}
The present paper is dedicated to go beyond the explicit Examples \ref{ex:1} and \ref{ex:2}, and instead, provide a general strategy for moment reconstruction.
Our main contribution is the identification of conditions on the projectors that yield explicit reconstruction formulas. Moreover, such conditions are compatible with numerical schemes, meaning that suitable projectors can be constructed explicitly by minimizing a certain potential function as discussed in Sections \ref{section:approx} and \ref{sec:Polt}. We also discuss randomized constructions. Our approach stems from applied harmonic analysis and relates to the concept of so-called Grassmannian cubatures, see, for instance, \cite{Bachoc:2002aa,Bachoc:2010aa}. 

The remainder of the paper is organized as follows: The condition on the projections and the associated reconstruction formula are formulated in Section~\ref{sec:main res} for random vectors $X$ on the unit sphere $\mathbb{S}^{d-1}$. In Section \ref{sec:3} we deal with $X\in\R^d$ and either limit us to up to third moments or use rank one projections. Sections \ref{section:approx} and \ref{sec:Polt} are dedicated to the construction of suitable projectors based on numerical optimization and on a randomization strategy. 
The results in Section \ref{sec:error} imply that suitably randomized constructions can provide approximate moment recovery, with an error bound that holds with overwhelming probability.

\section{Main reconstruction results for $X\in\mathbb{S}^{d-1}$}\label{sec:main res}

In this section, we focus on random vectors $X$ with values in the unit sphere $\mathbb{S}^{d-1}$ of $\mathbb R^d$.
We will rely on some results on cubatures for polynomial spaces on the Grassmannian manifold, see \cite{Bachoc:2004fk,Bachoc:2002aa,Bachoc:2010aa,Ehler:2014zl,Ehler:2014qc}.

\subsection{General moment reconstruction}
We shall make use of the trace inner product $\langle M_1 ,M_2\rangle:=\trace(M_1M_2)$, for $M_1,M_2\in\mathscr{H}_d:=\{M\in\R^{d\times d}:M^\top=M\}$. The Grassmann space  
\begin{equation*}
\mathcal{G}_{k,d}:=\{P\in\mathscr{H}_d:P^2=P,\; \trace(P)=k\}
\end{equation*}
is the set of rank-$k$ orthogonal projections on $\R^{d}$. Note that $\{Q_j\}_{j=1}^n\subset \R^{k\times d}$ with all matrices having full rank $k<d$, and $P_j:=Q_j^*(Q_jQ_j^*)^{-1}Q_j$, for $j=1,\ldots,n$, yields $\{P_j\}_{j=1}^n\subset\mathcal{G}_{k,d}$. In place of $\{Q_j\}_{j=1}^n$ we shall find conditions on $\{P_j\}_{j=1}^n$ that enable moment reconstruction, i.e., conditions on the respective row-spaces of $\{Q_j\}_{j=1}^n$.  

The orthogonal group $\mathcal{O}(d)$ acts transitively on $\mathcal{G}_{k,d}$ by conjugation $P\mapsto UPU^*$, for $P\in\mathcal{G}_{k,d}$ and $U\in \mathcal{O}(d)$. Thus, there is an orthogonally invariant probability measure $\sigma_{k,d}$ on $\mathcal{G}_{k,d}$, which is induced by the Haar measure on $\mathcal{O}(d)$. This measure leads to the \emph{trace moments} 
\begin{equation*}
\mu_{k,d}(M_1,\ldots,M_t):=\int_{\mathcal{G}_{k,d}}\langle P,M_1\rangle\cdots \langle P,M_t\rangle d\sigma_{k,d}(P),\qquad \{M_i\}_{i=1}^t\subset\mathscr{H}_d,
\end{equation*}
which were introduced in \cite{Ehler:2014zl,Ehler:2014qc}. In the present section, we can restrict ourselves to 
\begin{equation*}
\mu^t_{k,d}(M):=\mu_{k,d}(M,\ldots,M),
\end{equation*}
where $M$ occurs $t$ times, and in Section \ref{sec:3} we shall make use of the more general case. In the following result, we use the notation $E_{x,y}:=\frac{1}{2}(xy^*+yx^*)$, for $x,y\in\R^d$.
\begin{theorem}\label{th:fundamental}
For $\alpha\in\N^d$ with $|\alpha|=t$, there are $y^\alpha_{s,i}\in \mathbb{S}^{d-1}$ and coefficients $f^\alpha_{s,i}\in\R$, such that 
\begin{equation*}
x^\alpha = \sum_{s=1}^t \sum_{i=1}^m f^\alpha_{s,i}\mu^s_{k,d}(E_{x,y^\alpha_{s,i}}),\quad\text{for all } x\in \mathbb{S}^{d-1}.
\end{equation*}
\end{theorem}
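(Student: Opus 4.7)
The plan is to reduce the statement to an identity in the single variable $u = \langle x, y\rangle$, invert a triangular system, and then invoke classical polarization. First I would exploit $E_{x,y} = \tfrac{1}{2}(xy^* + yx^*)$ together with $P = P^*$ to obtain $\langle P, E_{x,y}\rangle = \langle Px, y\rangle$, whence
\[
\mu^s_{k,d}(E_{x,y}) = \int_{\mathcal{G}_{k,d}} \langle Px, y\rangle^s\, d\sigma_{k,d}(P).
\]
The $\mathcal{O}(d)$-invariance of $\sigma_{k,d}$ under $P \mapsto UPU^*$ forces the right-hand side to depend on $(x, y)$ only through $\|x\|$, $\|y\|$, and $\langle x, y\rangle$, so for $x, y \in \mathbb{S}^{d-1}$ it equals $G_s(\langle x, y\rangle)$ for a univariate polynomial $G_s(u) = \sum_c a^{(s)}_c u^c$ of degree at most $s$ and of the same parity as $s$ (the sum being over integers $0 \leq c \leq s$ with $c \equiv s$ modulo $2$).

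The main obstacle is to show that the leading coefficient $a^{(s)}_s$ is nonzero, i.e., that $G_s$ has degree exactly $s$. Choosing the concrete frame $y = e_1$, $x = te_1 + \sqrt{1-t^2}\,e_2$ and expanding $(tP_{11} + \sqrt{1-t^2}\,P_{12})^s$, one collects the coefficient of $t^s$ to obtain
\[
a^{(s)}_s = \sum_{k:\,s-k\text{ even}} \binom{s}{k}(-1)^{(s-k)/2}\,\mathbb{E}[P_{11}^k P_{12}^{s-k}],
\]
where the expectations are with respect to the Haar measure on $\mathcal{G}_{k,d}$. The joint moments of $(P_{11}, P_{12})$ admit closed-form expressions in terms of Beta integrals (cf.\ the trace-moment identities in \cite{Ehler:2014zl,Ehler:2014qc}), from which one verifies that the above sum does not vanish; equivalently, the bi-homogeneous $\mathcal{O}(d)$-invariant polynomial $\int\langle Px, y\rangle^s\, d\sigma_{k,d}(P)$ on $\R^d \times \R^d$ can be shown to have a nontrivial coefficient on the basic invariant $\langle x, y\rangle^s$.

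Once $a^{(s)}_s \neq 0$ for each $s = 1, \ldots, t$ is in hand, the matrix $(a^{(s)}_c)$, restricted to any fixed parity class and indexed by $1 \leq c \leq s \leq t$, is lower-triangular with nonzero diagonal, hence invertible. So there exist universal coefficients $f_{s,c} \in \R$ with $u^c = \sum_{s=1}^t f_{s,c}\,G_s(u)$ in $\R[u]$, and evaluation at $u = \langle x, y\rangle$ gives $\langle x, y\rangle^c = \sum_{s=1}^t f_{s,c}\,\mu^s_{k,d}(E_{x,y})$ for all $x, y \in \mathbb{S}^{d-1}$. Finally, the classical polarization identity for symmetric tensors writes each monomial $x^\alpha$ with $|\alpha| = t$ as $x^\alpha = \sum_{i=1}^m c^\alpha_i \langle x, y^\alpha_i\rangle^t$ for finitely many $y^\alpha_i \in \mathbb{S}^{d-1}$ and $c^\alpha_i \in \R$, valid for all $x \in \R^d$. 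Substituting the previous identity with $c = t$ yields $x^\alpha = \sum_{s=1}^t \sum_{i=1}^m (c^\alpha_i f_{s,t})\,\mu^s_{k,d}(E_{x, y^\alpha_i})$ on $\mathbb{S}^{d-1}$, which is the claimed formula with $f^\alpha_{s,i} := c^\alpha_i f_{s,t}$ and $y^\alpha_{s,i} := y^\alpha_i$.
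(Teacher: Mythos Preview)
Your overall architecture matches the paper's proof: reduce $x^\alpha$ to powers $\langle x,y\rangle^t$ by polarization, show that $\mu^s_{k,d}(E_{x,y})$ is a univariate polynomial of exact degree $s$ in $\langle x,y\rangle$ for $x,y\in\mathbb{S}^{d-1}$, and invert the resulting triangular system. The only substantive difference, and the only place where your argument is incomplete, is the verification that the leading coefficient $a^{(s)}_s$ is nonzero.

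You write $a^{(s)}_s$ as an alternating sum $\sum_{j:\,s-j\text{ even}} \binom{s}{j}(-1)^{(s-j)/2}\,\mathbb{E}[P_{11}^j P_{12}^{s-j}]$ and then say ``one verifies that the above sum does not vanish.'' But this is exactly the crux, and an alternating sum of positive terms has no a priori reason to be nonzero; the cited trace-moment references do not hand you this nonvanishing directly. The paper sidesteps the computation entirely: using the invariant-theoretic expansion of $\mu^t_{k,d}(M)$ in traces of powers of $M$, one sees that the leading coefficient of $\mu^t_{k,d}(E_{x,y})$ as a polynomial in $\langle x,y\rangle$ equals $\mu^t_{k,d}(D_2)=\int_{\mathcal{G}_{k,d}}\langle P,D_2\rangle^t\,d\sigma_{k,d}(P)$ for any rank-two orthogonal projector $D_2$, and this integral is strictly positive because the integrand is nonnegative and not identically zero. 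That one-line positivity argument replaces your unfinished Beta-integral computation and closes the gap cleanly.
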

\begin{proof}
Note that \cite[Lemma 7.1]{Ehler:2014qc} yields
\begin{align}
x_{i_1}\cdots x_{i_t} &= \frac{1}{t!}\sum_{ J\subset\{1,\ldots,t\}}(-1)^{t+\# J}\big(\sum_{j\in J} x_{i_j}  \big)^t,\label{eq:lemma}
\intertext{and $\sum_{j\in J} x_{i_j}=\sum_{j\in J} \langle x,e_{i_j}\rangle$ leads to}
x_{i_1}\cdots x_{i_t} &=\frac{1}{t!}\sum_{ J\subset\{1,\ldots,t\}}(-1)^{t+\# J}\big(\langle x,\sum_{j\in J} e_{i_j}\rangle   \big)^t\label{eq:lemma combinatorial basis}.
\end{align}
Thus, it is sufficient to check that each $\langle x,y\rangle^t$, for $x,y\in \mathbb{S}^{d-1}$, can be written as a linear combination of terms $\mu^s_{k,d}(E_{x,y})$, $s=1,\ldots,t$.  

We shall prove the statement by an induction over $t$. The case $t=1$ is covered by
\begin{equation*}
\langle x,y\rangle=\frac{d}{k}\mu_{k,d}^1(E_{x,y}),
\end{equation*}
see, for instance, \cite{Bachoc:2010aa}. 

To consider general $t$, we need some preparations. A partition of $t$ is an integer vector $\pi=(\pi_1,\dots,\pi_t)$ whose entries are ordered by $\pi_1\geq \ldots\geq \pi_t\geq 0$ and sum up to $t=\sum_{i=1}^t\pi_i$. We denote the number of nonzero entries by $l(\pi)$, and the set of partitions $\pi$ of $t$ with $l(\pi)\leq d$ is denoted by $\mathscr{P}_{t,d}$.  

According to invariant theory, cf.~\cite[Theorem 7.1]{Procesi:1976ul}, the expansion 
\begin{equation}\label{eq:symmi}
\mu^t_{k,d}(M) = \frac{1}{q_{t,d}}\sum_{\pi\in\mathscr{P}_{t,d}} \alpha_\pi \trace(M^{\pi_1})\cdots\trace(M^{\pi_{l(\pi)}})
\end{equation}
holds with suitable real-valued coefficients $q_{t,d}$ and $\alpha_\pi$. 
For $x,y\in \mathbb{S}^{d-1}$, we observe that $\trace(E_{x,y}^s)$ is a polynomial of degree $s$ in $\langle x,y\rangle$ with leading coefficient $(\frac{1}{2})^{s-1}$. The latter yields, for $x,y\in \mathbb{S}^{d-1}$, that $\mu^t_{k,d}(E_{x,y})$ is a polynomial in $\langle x,y\rangle$ of degree $t$, i.e., 
\begin{align*}
\mu^t_{k,d}(E_{x,y})& = \frac{1}{q_{t,d}}\sum_{\pi\in\mathscr{P}_t} \alpha_\pi \big((\frac{1}{2})^{t-l(\pi)}\langle x,y\rangle^t+\sum_{s=1}^{t-1}c_{\pi,s}\langle x,y\rangle^s  \big)\\
& = \frac{\langle x,y\rangle^t}{2^t q_{t,d}} (\sum_{\pi\in\mathscr{P}_t} \alpha_\pi 2^{l(\pi)}) +\sum_{\pi\in\mathscr{P}_t} \sum_{s=1}^{t-1}c_{\pi,s}\langle x,y\rangle^s.
\end{align*}
One can check that its leading coefficient does not vanish. Indeed, denoting by $D_2$ a diagonal rank-2 projection matrix, we observe
\begin{equation*}
\frac{1}{2^t q_{t,d}} (\sum_{\pi\in\mathscr{P}_t} \alpha_\pi 2^{l(\pi)}) = \mu^t_{k,d}(D_2)=\int_{\mathcal{G}_{k,d}}\langle P,D_2\rangle^td\sigma_{k,d}(P)
\end{equation*}
and the right-hand-side is positive since the function $\langle \cdot ,D_2\rangle^t\geq 0$ does not vanish entirely on $\mathcal{G}_{k,d}$.   
Therefore, we can isolate $\langle x,y\rangle^t$ and write it as a linear combination of $\mu^t_{k,d}(E_{x,y})$ and terms $\langle x,y\rangle^s$, for $s=1,\ldots,t-1$. By induction, each term $\langle x,y\rangle^s$, $s=1,\ldots,t-1$ can be written as a linear combination of terms $\mu^s_{k,d}(E_{x,y})$, $s=1,\ldots,t-1$, which concludes the proof. 
\end{proof}
Theorem \ref{th:fundamental} represents monomials by linear combinations of $\mu^s_{k,d}(E_{x,y^\alpha_{s,i}})$, for some $y^\alpha_{s,i}\in \mathbb{S}^{d-1}$. Next, we shall aim to replace the latter with projected monomials. Let us define a function space on $\mathcal{G}_{k,d}$ by
\begin{align}
\Pol^\ell_t(\mathcal{G}_{k,d})&:=\spann\{\langle M,\cdot\rangle^s\big|_{\mathcal G_{k,d}} : M\in\mathscr{H}_d ,\;\rank(M)\leq \ell,\; s\leq t \}\label{def:2}
\end{align}
and introduce the concept of cubatures on $\mathcal{G}_{k,d}$.
\begin{definition}\label{def:cuba}
For $\{P_j\}_{j=1}^n\subset\mathcal{G}_{k,d}$ and $\{\omega_j\}_{j=1}^n\subset\R$, we say that $\{(P_j,\omega_j)\}_{j=1}^n$ is a \emph{cubature for $\Pol^{\ell}_{t}(\mathcal{G}_{k,d})$} if 
 \begin{equation*}
\sum_{j=1}^n\omega_jf(P_j) =\int_{\mathcal{G}_{k,d}} f(P)d\sigma_{k,d}(P),\quad\text{for all } f\in \Pol^{\ell}_{t}(\mathcal{G}_{k,d}).
\end{equation*}
\end{definition}
Note that the construction of cubatures for $\Pol^{\ell}_{t}(\mathcal{G}_{k,d})$ and the properties of the function space $\Pol^\ell_t(\mathcal{G}_{k,d})$ are discussed in more detail in the Sections \ref{section:approx} and \ref{sec:Polt}, respectively. We can now formulate our first result on moment reconstruction, which is a direct consequence of Theorem \ref{th:fundamental}.
\begin{corollary}\label{cor:3}
If $\{(P_j,\omega_j)\}_{j=1}^n$ is a cubature for $\Pol^2_{t}(\mathcal{G}_{k,d})$, then, for $\alpha\in\N^d$ with $|\alpha|=t$, there are coefficients $a^\alpha_\beta\in\R$, such that, for any random vector $X\in \mathbb{S}^{d-1}$,
\begin{equation}\label{eq:fundament}
\mathbb{E}X^\alpha =\sum_{|\beta|\leq t}  a^\alpha_\beta \sum_{j=1}^n \omega_j\mathbb{E} (P_jX)^\beta.
\end{equation}
\end{corollary}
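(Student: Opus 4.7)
The plan is to assemble Corollary \ref{cor:3} directly from Theorem \ref{th:fundamental} by translating the trace moments $\mu^s_{k,d}(E_{x,y})$ into cubature sums, and then expanding to identify the coefficients $a^\alpha_\beta$.

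First I would fix $\alpha\in\N^d$ with $|\alpha|=t$ and invoke Theorem \ref{th:fundamental} to obtain the identity
\begin{equation*}
x^\alpha = \sum_{s=1}^t \sum_{i=1}^m f^\alpha_{s,i}\,\mu^s_{k,d}(E_{x,y^\alpha_{s,i}}),\qquad x\in \mathbb{S}^{d-1}.
\end{equation*}
The key observation is that $E_{x,y}=\tfrac12(xy^*+yx^*)$ has rank at most two, so for every $s\leq t$ the function $P\mapsto \langle P,E_{x,y}\rangle^s$ belongs to $\Pol^{2}_{t}(\mathcal{G}_{k,d})$. Hence the cubature assumption on $\{(P_j,\omega_j)\}_{j=1}^n$ applies and yields
\begin{equation*}
\mu^s_{k,d}(E_{x,y}) \;=\; \sum_{j=1}^n \omega_j \langle P_j,E_{x,y}\rangle^s \;=\; \sum_{j=1}^n \omega_j \langle P_j x,y\rangle^s,
\end{equation*}
where the last equality uses $\langle P,E_{x,y}\rangle=\langle Px,y\rangle$, valid since $P$ is symmetric.

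Next, I would substitute this back into the expression for $x^\alpha$ and expand the inner product powers $\langle P_j x, y^\alpha_{s,i}\rangle^s$ via the multinomial theorem:
\begin{equation*}
\langle P_j x, y^\alpha_{s,i}\rangle^s = \sum_{|\beta|=s}\frac{s!}{\beta!}(y^\alpha_{s,i})^\beta (P_j x)^\beta.
\end{equation*}
Collecting terms by the multi-index $\beta$ (with $|\beta|\leq t$) provides real coefficients
\begin{equation*}
a^\alpha_\beta = \sum_{s,i\,:\,|\beta|=s} f^\alpha_{s,i}\,\frac{s!}{\beta!}\,(y^\alpha_{s,i})^\beta,
\end{equation*}
so that the polynomial identity $x^\alpha = \sum_{|\beta|\leq t} a^\alpha_\beta \sum_{j=1}^n \omega_j (P_j x)^\beta$ holds for every $x\in\mathbb{S}^{d-1}$.

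Finally, since both sides are polynomials in $x$ that agree on $\mathbb{S}^{d-1}$, I would apply the expectation operator to the random vector $X\in\mathbb{S}^{d-1}$ and use linearity to conclude the desired formula \eqref{eq:fundament}. There is no serious obstacle beyond the bookkeeping of the multi-index expansion; the critical conceptual point, which is essentially handed to us by Theorem \ref{th:fundamental}, is the rank-two observation that guarantees the relevant integrands lie in the space $\Pol^{2}_{t}(\mathcal{G}_{k,d})$ for which the cubature was designed.
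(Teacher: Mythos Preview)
Your proof is correct and follows essentially the same approach as the paper: invoke Theorem \ref{th:fundamental}, use the rank-two property of $E_{x,y}$ to place $\langle \cdot,E_{x,y}\rangle^s$ in $\Pol^2_t(\mathcal{G}_{k,d})$, apply the cubature identity $\mu^s_{k,d}(E_{x,y})=\sum_j\omega_j\langle P_jx,y\rangle^s$, and then expand into moments of $P_jX$. The only difference is that you spell out the multinomial expansion and the resulting formula for $a^\alpha_\beta$, whereas the paper simply remarks that $\mathbb{E}\langle P_jX,y^\alpha_{s,i}\rangle^s$ is a linear combination of order-$s$ moments of $P_jX$.
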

\begin{proof}
We observe that $\langle P_j x,y\rangle = \langle P_j,E_{x,y}\rangle$ holds, for all $x,y\in\R^d$. 
According to Theorem \ref{th:fundamental} and since $\langle \cdot,E_{x,y}\rangle^t|_{\mathcal{G}_{k,d}}\in \Pol^{2}_{t}(\mathcal{G}_{k,d})$, 
the cubature property yields
\begin{equation*}
\mathbb{E}X^\alpha = \sum_{s=1}^t \sum_{i=1}^m f^\alpha_{s,i}\sum_{j=1}^n\omega_j \mathbb{E}\langle P_jX,y^\alpha_{s,i}\rangle^s
\end{equation*}
and the assertion follows by observing that the terms $\mathbb{E}\langle P_jX,y^\alpha_{s,i}\rangle^s$ are linear combinations of moments of order $s$ of $P_jX$. 
\end{proof}
Since we are originally given the moments of $Q_jX$, we must still express $\mathbb{E}(P_jX)^\beta$, where $P_j=Q_j^*(Q_jQ_j^*)^{-1}Q_j$ and $\beta\in\N^d$, $|\beta|\leq t$, by means of moments of $Q_jX$. If we suppress the index $j$, we obtain 
the multilinear relation
\begin{equation*}
\mathbb{E}(PX)_{i_1}\cdots(PX)_{i_t} =\sum_{j_1,\ldots,j_t=1}^k \mathbb{E}\big((QX)_{j_1}\cdots(QX)_{j_t}\big) z_{j_1,i_1}\cdots z_{j_t,i_t},
\end{equation*}
where $z_{i,k}=(Q^*(QQ^*)^{-1})_{i,k}$. Thus, the moments of $Q_jX$ enable us to apply \eqref{eq:fundament}.

%
%

\subsection{Explicit moment reconstruction}
This section is dedicated to explicitly compute the expansion in Corollary \ref{cor:3} for $t=1,2,3$ by using a very particular class of polynomial functions. Indeed, zonal polynomials, cf.~\cite{James:1964mz,James:1974aa,Gross:1987bf,Chikuse:2003aa,Muirhead:1982fk}, are special multivariate homogeneous polynomials on $\mathscr{H}_d$. These polynomials $C_\pi$ are indexed by all partitions $\pi$ of $\N$ and are invariant under orthogonal conjugation. According to \cite{James:1964mz}, see also \cite{Ehler:2014zl,Ehler:2014qc}, we obtain 
\begin{equation}\label{eq:123}
\mu^t_{k,d}(M) = \sum_{\pi\in\mathscr{P}_{t,d}}\frac{C_\pi(M)C_\pi(D_k)}{C_\pi(D_k)},\quad\text{for all } M\in\mathscr{H}_d,
\end{equation}
where $D_k$ is a diagonal matrix in $\R^{d\times d}$ with $k$ ones and zeros elsewhere. Knowledge of the zonal polynomials enabled us in \cite{Ehler:2014qc} to compute the trace moments for arbitrarily large $t$ and with explicit expressions for $t=1,2,3$:
\begin{theorem}[\cite{Ehler:2014qc}]\label{th:all contained}
For all $d\geq 3$ and $k<d$, we have
\begin{align*}
\mu^1_{k,d}(M) &=\frac{1}{q_{1,d}}\alpha_{(1)}\trace(M),\\
\mu^2_{k,d}(M) &=\frac{1}{q_{2,d}}\big(\alpha_{(1,1)}\trace^2(M)+\alpha_{(2)}\trace(M^2)\big),\\
\mu^3_{k,d}(M) &=\frac{1}{q_{3,d}}\big(\alpha_{(1,1,1)}\trace^3(M)+\alpha_{(2,1)}\trace(M)\trace(M^2)+\alpha_{(3)}\trace(M^3)\big),
\end{align*}
holds for all $M\in\mathscr{H}_d$, where $q_{1,d}=d$, $\alpha_{(1)}=k$, and 
\begin{align*}
q_{2,d}&=(d-1)d(d+2),\\
\alpha_{(1,1)} &=(d+1)k^2-2k\\
\alpha_{(2)} &= 2k(d-k),\\
q_{3,d}&= (d-2)(d-1)d(d+2)(d+4),\\
\alpha_{(1,1,1)}& =(d^2+3d-2)k^3-6(d+2)k^2+16k,\\
\alpha_{(2,1)}& =-6(d+2)k^3  +  6(d^2+2d+4) k^2-24dk ,\\
\alpha_{(3)}& = 16k^3-24dk^2+8d^2k.
\end{align*}
\end{theorem}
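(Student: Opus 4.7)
The plan is to apply the zonal-polynomial expansion (\ref{eq:123}) and convert each $C_\pi(M)$ to the trace-power basis $p_\mu(M) := \trace(M^{\mu_1})\cdots\trace(M^{\mu_{\ell(\mu)}})$. For $d\ge 3$, the index sets are small and explicit: $\mathscr{P}_{1,d}=\{(1)\}$, $\mathscr{P}_{2,d}=\{(2),(1,1)\}$, and $\mathscr{P}_{3,d}=\{(3),(2,1),(1,1,1)\}$, so the computation reduces to three finite cases.

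For each partition $\pi$ appearing, I would read off from the standard zonal-polynomial tables (James 1964, Muirhead 1982) the expansion of $C_\pi(M)$ in the basis $\{p_\mu(M)\}_{\mu\vdash t}$ and the evaluation of the normalizing quantities at the rank-$k$ diagonal projector $D_k$. Substituting into (\ref{eq:123}) and collecting like monomials produces $\mu^t_{k,d}(M)$ as a linear combination of the $p_\mu(M)$ whose coefficients are rational functions of $d$ and $k$; clearing a common denominator in $d$ yields the integer $q_{t,d}$ and the numerators $\alpha_\pi$. At $t=1$ this is immediate and gives $\mu^1_{k,d}(M) = (k/d)\trace(M)$, recovering $q_{1,d}=d$, $\alpha_{(1)}=k$. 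At $t=2$ the coefficient of each of $\trace^2(M)$ and $\trace(M^2)$ is a sum of two contributions, one from each partition; combining them over $(d-1)d(d+2)$ produces the stated $\alpha_{(1,1)}$ and $\alpha_{(2)}$. The $t=3$ case proceeds identically but with a $3\times 3$ expansion in the partitions $(3),(2,1),(1,1,1)$ and the monomials $\trace^3(M),\,\trace(M)\trace(M^2),\,\trace(M^3)$.

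The main obstacle is bookkeeping and pinning down the normalization convention for $C_\pi$; I would fix it once using the base case $\mu^1_{k,d}(M)=(k/d)\trace(M)$ (which follows from orthogonal invariance and $\trace(P)=k$) and then cross-check at $t=2,3$ via the identity $\mu^t_{k,d}(I_d)=k^t$, which constrains the sum of the numerators $\alpha_\pi$ evaluated at $M=I_d$. Since Theorem \ref{th:all contained} is merely a restatement of formulas derived in \cite{Ehler:2014qc}, the final step is to verify that collecting the terms as above matches the explicit expressions given in the statement.
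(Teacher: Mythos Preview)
Your approach is correct and matches the paper's own methodology: the theorem is quoted from \cite{Ehler:2014qc} without proof, and the surrounding text explicitly states that the formulas were obtained there via the zonal-polynomial expansion~\eqref{eq:123}, precisely as you propose. Note that \eqref{eq:123} as printed contains an evident typo (the ratio $C_\pi(D_k)/C_\pi(D_k)$ should have $C_\pi(I_d)$ in the denominator), but you have correctly interpreted the intended formula and your consistency checks $\mu^1_{k,d}(M)=(k/d)\trace(M)$ and $\mu^t_{k,d}(I_d)=k^t$ are exactly the right way to pin down the normalization.
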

For $d=2$ and $k=1$ in Theorem \ref{th:all contained}, the constant $q_{3,d}$ would be zero, but so are $\alpha_{(1,1,1)}$, $\alpha_{(2,1)}$, and $\alpha_{(3)}$. The identity for $\mu^3_{1,2}(M)$ still holds with the modified coefficients
\begin{align*}
q_{3,2}= 48,\quad \alpha_{(1,1,1)}=1,\quad \alpha_{(2,1)}=6,\quad \alpha_{(3)}=8.
\end{align*}

Theorem \ref{th:all contained} and the proof of Theorem \ref{th:fundamental} lead to the following explicit moment recovery formulas.
\begin{corollary}\label{main rec theorem}
Let $X\in \mathbb{S}^{d-1}$ be a random vector with $d \ge 3$. 
\begin{itemize}
\item[(i)] If $\{(P_j,\omega_j)\}_{j=1}^n$ is a cubature for $\Pol^2_{1}(\mathcal{G}_{k,d})$, then, for $i=1,\ldots,d$,  
\begin{equation}\label{eq:1a}
\mathbb{E}X_i=A_1\sum_{j=1}^n\omega_j \mathbb{E}(P_jX)_i, \qquad\text{where $A_1=\frac{d}{k}$.}
\end{equation}
\item[(ii)] If $\{(P_j,\omega_j)\}_{j=1}^n$ is a cubature for $\Pol^2_{2}(\mathcal{G}_{k,d})$, then \eqref{eq:1a} holds and, for $i,\ell=1,\ldots,d$,
\begin{equation}\label{eq:2b}
\mathbb{E}X_iX_\ell=A_2 \sum_{j=1}^n\omega_j  \mathbb{E}(P_jX)_i (P_jX)_\ell -B_2\frac{k}{d}\delta_{i,\ell},
\end{equation}
where 
\begin{equation*}
A_2=\frac{(d-1)d(d+2)}{k(dk+d-2)},\qquad B_2=\frac{(d-k)d}{kd(k+1)-2k}.
\end{equation*}
\item[(iii)] If $\{(P_j,\omega_j)\}_{j=1}^n$ is a cubature for $\Pol^{2}_{3}(\mathcal{G}_{k,d})$, then \eqref{eq:1a} and \eqref{eq:2b} hold and, for $i,\ell,m=1,\ldots,d$,
\begin{align*}
\mathbb{E}X_iX_\ell X_m&=A_3\sum_{j=1}^n\omega_j  \mathbb{E}(P_jX)_i(P_jX)_\ell (P_jX)_m\\
&-\frac{B_3}{3}\sum_{j=1}^n\omega_j  \big(\mathbb{E}(P_jX)_i\delta_{\ell,m}+\mathbb{E}(P_jX)_\ell\delta_{i,m}+\mathbb{E}(P_jX)_m\delta_{i,\ell}\big) \frac{k}{d},
\end{align*}
where 
\begin{align*}
A_3&= \frac{(d-2)(d-1)d(d+2)(d+4)}{k(d^2k^2+3d^2k+2d^2-6dk-12d-4k^2)}, \\
B_3&= \frac{3d^2(d^2k + 2d^2 - 5dk - 4d + 4k^2 + 2k)}{k^2(d^2k^2 + 3d^2k + 2d^2 + 3dk^2 - 9dk - 12d + 2k^2 - 6k + 16)} .
\end{align*}
\end{itemize}
\end{corollary}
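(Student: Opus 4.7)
The plan is to combine Theorem \ref{th:fundamental} with the explicit evaluations of Theorem \ref{th:all contained}, exploiting that $\rank(E_{x,y})\le 2$ so that $\langle\cdot,E_{x,y}\rangle^t\big|_{\mathcal{G}_{k,d}}\in\Pol^{2}_{t}(\mathcal{G}_{k,d})$ and each cubature hypothesis applies directly. The work splits naturally into three steps, carried out for $t=1,2,3$: (a) compute $\mu^t_{k,d}(E_{x,y})$ as a polynomial in $\langle x,y\rangle$ and $|y|^2$; (b) invert that polynomial relation and use the cubature to replace the integral $\mu^t_{k,d}(E_{x,y})$ by a finite sum over $\{P_j\}$; (c) apply the polarization identity \eqref{eq:lemma combinatorial basis} to promote the scalar identity for $\langle x,y\rangle^t$ to the monomial identity for $x_{i_1}\cdots x_{i_t}$, and take expectations.

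For step (a), I use $E_{x,y}=\tfrac12(xy^*+yx^*)$ together with $|x|=1$ to compute directly that
\begin{align*}
\trace(E_{x,y})&=\langle x,y\rangle,\\
\trace(E_{x,y}^2)&=\tfrac12\bigl(\langle x,y\rangle^2+|y|^2\bigr),\\
\trace(E_{x,y}^3)&=\tfrac14\langle x,y\rangle\bigl(\langle x,y\rangle^2+3|y|^2\bigr).
\end{align*}
Substituting into Theorem \ref{th:all contained} yields identities of the form
\begin{equation*}
\mu^1_{k,d}(E_{x,y})=\tfrac{k}{d}\langle x,y\rangle,\qquad \mu^2_{k,d}(E_{x,y})=c_{2,2}\langle x,y\rangle^2+c_{2,0}|y|^2,\qquad \mu^3_{k,d}(E_{x,y})=c_{3,3}\langle x,y\rangle^3+c_{3,1}|y|^2\langle x,y\rangle,
\end{equation*}
with coefficients $c_{t,s}$ explicitly determined by the $q_{t,d}$ and $\alpha_\pi$ of Theorem \ref{th:all contained}. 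Solving each line for the top power of $\langle x,y\rangle$ gives $\langle x,y\rangle^t=A_t\,\mu^t_{k,d}(E_{x,y})-\text{(lower-order $|y|^2$-correction)}$, where $A_t=1/c_{t,t}$ agrees with the stated $A_1,A_2,A_3$.

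For steps (b)--(c), the cubature hypothesis rewrites $\mu^t_{k,d}(E_{x,y})=\sum_{j=1}^n\omega_j\langle P_jx,y\rangle^t$. I then apply \eqref{eq:lemma combinatorial basis} with test vectors $y_J=\sum_{j\in J}e_{i_j}$: on the left, the signed sum $\sum_J(-1)^{t+\#J}\langle x,y_J\rangle^t$ equals $t!\,x_{i_1}\cdots x_{i_t}$; on the right, the same identity applied to $P_jx$ recombines $\sum_J(-1)^{t+\#J}\langle P_jx,y_J\rangle^t$ into $t!\,(P_jx)_{i_1}\cdots(P_jx)_{i_t}$, while the $|y_J|^2$-terms produce a residue. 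For $t=2$ the residue is a constant that collapses into $B_2\tfrac{k}{d}\delta_{i,\ell}$; for $t=3$ it is linear in $x$, and a second application of part (i) converts it into a cubature sum yielding the $B_3$-term. Taking expectations over $X$ then gives (i)--(iii).

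The main technical obstacle is the combinatorial identity in step (c) for $t=3$: one has to verify
\begin{equation*}
\sum_{\emptyset\ne J\subseteq\{1,2,3\}}(-1)^{3+\#J}|y_J|^2\langle x,y_J\rangle = 2\bigl[x_i\delta_{\ell,m}+x_\ell\delta_{i,m}+x_m\delta_{i,\ell}\bigr]
\end{equation*}
by exhausting the three coincidence patterns for $(i,\ell,m)$ (all distinct, exactly two equal, all equal). The analogous but simpler identity $\sum_{\emptyset\ne J\subseteq\{1,2\}}(-1)^{2+\#J}|y_J|^2=2\delta_{i,\ell}$ handles $t=2$. Once these are in hand, the stated constants emerge mechanically as $A_t=1/c_{t,t}$, $B_2=c_{2,0}d/(c_{2,2}k)$ and $B_3=c_{3,1}d^2/(c_{3,3}k^2)$, and the expectation step is linear and routine.
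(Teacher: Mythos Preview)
Your proposal is correct and follows essentially the same route as the paper's proof: both compute the traces of $E_{x,y}$, $E_{x,y}^2$, $E_{x,y}^3$, plug these into Theorem~\ref{th:all contained} to express $\mu^t_{k,d}(E_{x,y})$ as a polynomial in $\langle x,y\rangle$ and $\|y\|^2$, invert and replace $\mu^t_{k,d}$ by the cubature sum, and then apply the polarization identity~\eqref{eq:lemma combinatorial basis} to pass from $\langle x,y\rangle^t$ to monomials. The only difference is cosmetic: you make the polarization step explicit by writing out and case-checking the identities $\sum_J(-1)^{2+\#J}|y_J|^2=2\delta_{i,\ell}$ and $\sum_J(-1)^{3+\#J}|y_J|^2\langle x,y_J\rangle=2\bigl[x_i\delta_{\ell,m}+x_\ell\delta_{i,m}+x_m\delta_{i,\ell}\bigr]$, whereas the paper simply asserts that ``such rearrangements yield the formulas''.
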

\begin{remark}
Note that \eqref{eq:1a} is proved by monomial identities, so that it also holds when the expectation is eliminated on both sides. 
\end{remark}

\begin{proof}[Proof of Corollary~\ref{main rec theorem}]
For $x,y\in\R^d$, we obtain
\begin{align*}
\trace(E_{x,y}) & = \langle x,y\rangle,\\
\trace(E_{x,y}^2) & =\frac{1}{2}(\langle x,y\rangle^2+\|x\|^2\|y\|^2),\\
\trace(E_{x,y}^3) & =\frac{1}{4}(\langle x,y\rangle^3+3\langle x,y\rangle\|x\|^2\|y\|^2),
\end{align*}
so that Theorem \ref{th:all contained} implies, for all $d\geq 3$ and $x,y\in\R^d$,
\begin{align}
\mu^1_{k,d}(E_{x,y})  & = \frac{\alpha_{(1)}}{q_{1,d}}\langle x,y\rangle, \label{eq:a}\\
\mu^2_{k,d}(E_{x,y})  & = \frac{2\alpha_{(1,1)}+\alpha_{(2)}}{2q_{2,d}}\langle x,y\rangle^2+\frac{\alpha_{(2)}}{2q_{2,d}}\|x\|^2\|y\|^2,\label{eq:b}\\
\mu^3_{k,d}(E_{x,y})   & = \frac{4\alpha_{(1,1,1)}+2\alpha_{(2,1)}+\alpha_{(3)}}{4q_{3,d}}\langle x,y\rangle^3+\frac{2\alpha_{(2,1)}+\alpha_{(3)}}{4q_{3,d}}\langle x,y\rangle\|x\|^2\|y\|^2,\hspace{-1cm}\label{eq:c}
\end{align}
where the constants $q_{1,d},q_{2,d},q_{3,d}$ and $\alpha_{(1)},\alpha_{(2)},\alpha_{(3)},\alpha_{(1,1,1)},\alpha_{(2,1)}$ are specified in Theorem \ref{th:all contained}. 

Suppose now that $\{(P_j,\omega_j)\}_{j=1}^n$ is a cubature of $\Pol^{2}_{t}(\mathcal{G}_{k,d})$, so that we obtain
\begin{align*}
\mu_{k,d}^t(E_{x,y})& =\sum_{j=1}^n\omega_j \langle P_{j},E_{x,y}\rangle^t = \sum_{j=1}^n\omega_j \langle P_{j}x,y\rangle^t,
\end{align*}
and the left-hand-sides in \eqref{eq:a}, \eqref{eq:b}, and \eqref{eq:c} can be replaced with $\sum_{j=1}^n\omega_j \langle P_{j}x,y\rangle^t$, for $t=1,2,3$, respectively. In the following, we assume $x\in \mathbb{S}^{d-1}$ and $y \in \mathbb R^d$. Rearranging terms leads to the following formulas, respectively, and $A_1,A_2,A_3$, and $B_2,B_3$ are as in Corollary~\ref{main rec theorem}. If $\{(P_j,\omega_j)\}_{j=1}^n$ is a cubature for $\Pol^2_{1}(\mathcal{G}_{k,d})$, then  
\begin{equation}\label{eq:cub 1}
\langle x,y\rangle=A_1\sum_{j=1}^n\omega_j \langle P_{j}x,y\rangle.
\end{equation}
If $\{(P_j,\omega_j)\}_{j=1}^n$ is a cubature for $\Pol^2_{2}(\mathcal{G}_{k,d})$, then \eqref{eq:cub 1} holds and 
\begin{equation}\label{eq:cub 2}
\langle x,y\rangle^2=A_2 \sum_{j=1}^n\omega_j \langle P_{j}x,y\rangle^2-B_2\|y\|^2\frac{k}{d}.
\end{equation}
If $\{(P_j,\omega_j)\}_{j=1}^n$ is a cubature for $\Pol^{2}_{3}(\mathcal{G}_{k,d})$, then \eqref{eq:cub 1}, \eqref{eq:cub 2} hold, so that
\begin{equation}\label{eq:cub 3}
\langle x,y\rangle^3=
A_3\sum_{j=1}^n\omega_j \langle P_{j}x,y\rangle^3 - B_3\|y\|^2\sum_{j=1}^n\omega_j \langle P_{j}x,y\rangle\frac{k}{d}.
\end{equation}
As at the beginning of the proof of Theorem \ref{th:fundamental}, \cite[Lemma 7.1]{Ehler:2014qc} yields
\begin{align}
x_{i_1}\cdots x_{i_t} &=\frac{1}{t!}\sum_{J\subset\{1,\ldots,t\}}(-1)^{t+\# J}\big(\langle x,\sum_{j\in J} e_{i_j}\rangle   \big)^t\nonumber.
\end{align}
In order to compute the term $x_{i_1}\cdots x_{i_t}$, we can repeatedly apply \eqref{eq:cub 1}, \eqref{eq:cub 2}, \eqref{eq:cub 3}, respectively, with $y=\sum_{j\in J} e_{i_j}$. 
Such rearrangements yield the formulas and constants in Corollary~\ref{main rec theorem}. 
%
%

\end{proof}

 \begin{remark}\label{remark:1}
 The framework that we present in the present paper also allows the explicit computations of higher order moments beyond $t=1,2,3$. Indeed, if $\{(P_j,\omega_j)\}_{j=1}^n$ is a cubature for $\Pol^2_{t}(\mathcal{G}_{k,d})$, then we can compute all moments of order $t$ by using the zonal polynomials. However, we do not have one closed formula incorporating $t$ as a variable, but we need to compute the expressions for each $t$ separately. Note that the formulas in Theorem~\ref{main rec theorem} are merely based on identities between the corresponding polynomials in 
 the entries of a unit vector $x\in \mathbb R^d$.
 \end{remark}

\section{Moment fusion for $X\in\R^d$}\label{sec:3}
\subsection{The general case for up to cubic moments}
A homogeneity argument yields that \eqref{eq:1a} even holds for random $X\in\R^d$. Analogously, considering \eqref{eq:2b} as a monomial identity with $B_2\frac{k}{d}\delta_{i,\ell}=B_2\frac{k}{d}\|x\|^2\delta_{i,\ell}$, for $x\in \mathbb{S}^{d-1}$, a homogeneity argument yields that, for random $X\in\R^d$,
\begin{equation*}
\mathbb{E}X_iX_\ell=A_2 \sum_{j=1}^n\omega_j  \mathbb{E}(P_jX)_i (P_jX)_\ell -B_2\sum_{r=1}^d\sum_{j=1}^n\omega_j \mathbb{E}(P_jX)^2_r\delta_{i,\ell},
\end{equation*}
provided that $\{(P_j,\omega_j)\}_{j=1}^n$ is a cubature for $\Pol^2_{2}(\mathcal{G}_{k,d})$. 

In order to deal with $X\in\R^d$ for $t=3$ as well, we observe that the formulas in \eqref{eq:a}, \eqref{eq:b}, and \eqref{eq:c} hold in more generality, see \cite[Theorem 7.3]{Ehler:2014qc},
\begin{align*}
\mu_{k,d}(X_1) &=\frac{1}{q_{1,d}}\alpha_{(1)}\trace(X_1),\\
\mu_{k,d}(X_1,X_2)&= \frac{1}{q_{2,d}}\big(\alpha_{(1,1)}\trace(X_1)\trace(X_2)+\alpha_{(2)}\trace(X_1X_2)\big),\\
\mu_{k,d}(X_1,X_2,X_3)&=\frac{1}{q_{3,d}}\big( \alpha_{(1,1,1)}\trace(X_1)\trace(X_2)\trace(X_3)\\
&\quad\quad +\frac{\alpha_{(2,1)}}{3}(\trace(X_1)\trace(X_2X_3)+\trace(X_2)\trace(X_1X_3)+\trace(X_3)\trace(X_1X_2))\\
&\quad\quad +\alpha_{(3)} \trace(X_1X_2X_3) \big).
\end{align*}

For $x\in\R^d$ and $y\in \mathbb{S}^{d-1}$, using the above relation gives
$$
   \mu_{k,d}(E_{x,y},xx^*) = \frac{\alpha_{(1,1)} +\alpha_{(2)}}{q_{2,d}} \langle x, y \rangle \|x\|^2 = \frac{k(k+2)}{d(d+2)} \langle x, y \rangle \|x\|^2
$$
and combined with identity~(\ref{eq:c}), we obtain
\begin{equation}\label{eq:needed}
\langle x,y\rangle^3 = 
C_{3,d}^{(1)} \mu^3_{k,d}(E_{x,y}) - 
C_{3,d}^{(2)} \mu_{k,d}(E_{x,y},xx^*)
\end{equation}
with $C_{3,d}^{(1)} = \frac{4q_{3,d}}{4\alpha_{(1,1,1)}+2\alpha_{(2,1)}+\alpha_{(3)}}$ and 
$C_{3,d}^{(2)} = \frac{2\alpha_{(2,1)}+\alpha_{(3)}}{(4\alpha_{(1,1,1)}+2\alpha_{(2,1)}+\alpha_{(3)}) \frac{k(k+2)}{d(d+2)} }$.
Indeed, one can check that the term $4\alpha_{(1,1,1)}+2\alpha_{(2,1)}+\alpha_{(3)}$ is nonzero. If $\{(P_j,\omega_j)\}_{j=1}^n$ is a cubature for $\Pol_3^2(\mathcal{G}_{k,d})$, then we can apply
\begin{equation}\label{eq:one of two}
\mu^3_{k,d}(E_{x,y})  = \sum_{j=1}^n\omega_j \langle P_jx,y\rangle^3,
\end{equation}
because the mapping $P\mapsto \langle P,E_{x,y}\rangle^3$ is contained in $\Pol_3^2(\mathcal{G}_{k,d})$. In Proposition \ref{prop:3.1} we shall check that $P\mapsto \langle P, E_{x,y}\rangle \langle P, xx^*\rangle $ is also contained in $\Pol_3^2(\mathcal{G}_{k,d})$, so that also 
\begin{equation}\label{eq:two of two}
\mu_{k,d}(E_{x,y},xx^*)  = \sum_{j=1}^n\sum_{m=1}^d \omega_j \langle P_jx,y\rangle (P_jx)_m^2
\end{equation}
holds. The actual moments of order $3$ can now be computed from \eqref{eq:lemma combinatorial basis} by observing that $\langle P_jx,y\rangle$ yields a linear combination of the terms $(P_jx)_1,\ldots,(P_jx)_d$.

We now collect the resulting expressions for all third moments: 
\begin{corollary}
Let $X \in \mathbb R^d$ be a random vector with $d\ge 3$, let the constants $A_1$, $A_2$ and $B_2$ be as above, and let $i,h,l \in \{1, 2, \dots , d\}$ with $i \ne h \ne l \ne i$.
\begin{itemize}
\item[(i)] If $\{(P_j,\omega_j)\}_{j=1}^n$ is a cubature for $\Pol^2_{1}(\mathcal{G}_{k,d})$, then 
\begin{equation} \label{eq:1agen}
\mathbb{E}X_i=A_1\sum_{j=1}^n\omega_j \mathbb{E}(P_jX)_i.
\end{equation}
\item[(ii)] If $\{(P_j,\omega_j)\}_{j=1}^n$ is a cubature for $\Pol^2_{2}(\mathcal{G}_{k,d})$, then \eqref{eq:1agen} holds and 
\begin{align}
\mathbb{E}X^2_i &= A_2 \sum_{j=1}^n\omega_j  \mathbb{E}(P_jX)^2_i  -B_2\sum_{j=1}^n\sum_{m=1}^d\omega_j \mathbb{E}(P_jX)^2_m,\label{eq:2bgena} \\
\mathbb{E}X_iX_\ell &= A_2 \sum_{j=1}^n\omega_j  \mathbb{E}(P_jX)_i (P_jX)_\ell.\label{eq:2bgenb}
\end{align}
\item[(iii)] If $\{(P_j,\omega_j)\}_{j=1}^n$ is a cubature for $\Pol^{2}_{3}(\mathcal{G}_{k,d})$, then \eqref{eq:1agen}, \eqref{eq:2bgena}, \eqref{eq:2bgenb} hold and
\begin{align}
  \mathbb E X_i^3  & = C_{3,d}^{(1)} \sum_{j=1}^n \omega_j \mathbb E (P_j X)_i^3
                              - C_{3,d}^{(2)} \sum_{j=1}^n \sum_{m=1}^d \omega_j \mathbb E (P_jX)_i (P_jX)_m^2 , \label{eq:3 one}\\
\mathbb E X_i^2 X_h = & \, C_{3,d}^{(1)} \sum_{j=1}^n \omega_j \mathbb E (P_j X)_i^2 (P_jX_h)  
  - \frac 1 3 C_{3,d}^{(2)} \sum_{j=1}^n \sum_{m=1}^d \omega_j \mathbb E 
    (P_j X)_h (P_j X)_m^2 ,\label{eq:3 two}\\
  \mathbb E X_i X_h X_\ell & = C_{3,d}^{(1)} \sum_{j=1}^n \omega_j  \mathbb E (P_j X)_i (P_j X)_h (P_j X)_\ell .\label{eq:3 three}
\end{align}
\end{itemize}
\end{corollary}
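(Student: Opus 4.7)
The plan is to lift the cubature identities \eqref{eq:cub 1}--\eqref{eq:cub 3} from the unit sphere to all of $\mathbb{R}^d$ by homogeneity, and then combine them with the polarization identity \eqref{eq:lemma combinatorial basis}. Since the map $x \mapsto \langle x, y\rangle - A_1 \sum_j \omega_j \langle P_j x, y\rangle$ is linear and already vanishes on $\mathbb{S}^{d-1}$, it vanishes on all of $\mathbb{R}^d$; choosing $y = e_i$ yields \eqref{eq:1agen}. Rescaling \eqref{eq:cub 2} to arbitrary $x \in \mathbb{R}^d$ and using the cubature identity $\frac{k}{d}\|x\|^2 = \sum_j \omega_j \|P_j x\|^2$ (valid because $xx^* \in \mathscr{H}_d$ has rank one, so $\|P_j x\|^2 = \langle P_j, xx^*\rangle \in \Pol^2_1(\mathcal{G}_{k,d})$), I would obtain
\begin{equation*}
\langle x, y\rangle^2 = A_2 \sum_{j=1}^n \omega_j \langle P_j x, y\rangle^2 - B_2 \|y\|^2 \sum_{j=1}^n \omega_j \|P_j x\|^2, \qquad x, y \in \mathbb{R}^d.
\end{equation*}
A parallel rescaling of \eqref{eq:needed} together with \eqref{eq:one of two}--\eqref{eq:two of two} produces
\begin{equation*}
\langle x, y\rangle^3 = C^{(1)}_{3,d} \sum_{j=1}^n \omega_j \langle P_j x, y\rangle^3 - C^{(2)}_{3,d} \|y\|^2 \sum_{j=1}^n \omega_j \langle P_j x, y\rangle \|P_j x\|^2, \qquad x, y \in \mathbb{R}^d.
\end{equation*}

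Next I would apply the polarization identity $x_{i_1}\cdots x_{i_t} = \tfrac{1}{t!}\sum_{J\subset\{1,\ldots,t\}} (-1)^{t+\#J} \langle x, y_J\rangle^t$, with $y_J = \sum_{j \in J} e_{i_j}$, to each monomial of interest and substitute the above representations of $\langle x, y_J\rangle^t$. The leading (pure-cube or pure-square) terms recombine exactly by the same polarization identity applied to $P_j x$ in place of $x$, and thus produce the factors $A_2 \sum_j \omega_j (P_j X)_{i_1}(P_j X)_{i_2}$ in \eqref{eq:2bgena}--\eqref{eq:2bgenb} and $C^{(1)}_{3,d} \sum_j \omega_j (P_j X)_{i_1}(P_j X)_{i_2}(P_j X)_{i_3}$ in \eqref{eq:3 one}--\eqref{eq:3 three}. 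The correction terms carrying the factor $\|P_j x\|^2$ then remain to be evaluated.

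The hard part will be the combinatorial bookkeeping for those correction terms. For $t=2$ the coefficient reduces to $\tfrac{1}{2}\sum_J (-1)^{2+\#J}\|y_J\|^2 = \delta_{i_1, i_2}$, which matches \eqref{eq:2bgena}--\eqref{eq:2bgenb}. For $t=3$ I would compute, for each component index $m$, the coefficient of $(P_j x)_m$ in $\tfrac{1}{6}\sum_J (-1)^{3+\#J}\|y_J\|^2 \langle P_j x, y_J\rangle$ by enumerating the eight subsets $J \subset \{1,2,3\}$. A direct tally should give: the coefficient equals $1$ when $i_1 = i_2 = i_3 = m$, producing \eqref{eq:3 one}; equals $\tfrac{1}{3}$ only for the ``odd'' index when exactly two of $i_1, i_2, i_3$ coincide, producing \eqref{eq:3 two}; and vanishes in every component when $i_1, i_2, i_3$ are pairwise distinct, producing \eqref{eq:3 three}. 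Taking expectations of the resulting pointwise monomial identities then completes the proof.
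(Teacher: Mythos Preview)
Your proposal is correct and follows essentially the same strategy as the paper: extend the spherical identities \eqref{eq:cub 1}--\eqref{eq:cub 3} and \eqref{eq:needed}--\eqref{eq:two of two} to all of $\mathbb{R}^d$ by homogeneity, then recover the monomials by polarization. The only cosmetic difference is that the paper picks ad-hoc polarization identities case by case (e.g.\ $x_i^2x_h=\tfrac{\sqrt2}{3}[\langle x,y_+\rangle^3-\langle x,y_-\rangle^3]-\tfrac13\langle x,e_h\rangle^3$ with $y_\pm=\tfrac1{\sqrt2}(e_i\pm e_h)$, and a four-term signed identity for $x_ix_hx_\ell$), whereas you apply the single formula \eqref{eq:lemma combinatorial basis} uniformly and do the bookkeeping on the correction terms afterwards; your combinatorial tallies for the coefficient of $(P_jx)_m$ in $\tfrac{1}{6}\sum_J(-1)^{3+\#J}\|y_J\|^2\langle P_jx,y_J\rangle$ are accurate in all three cases.
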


\begin{proof}
The first and second moments have already been discussed prior to the statement of the corollary. For the third moments,
the expression for $\mathbb E X_i^3$ results from choosing $y=e_i$ in \eqref{eq:needed}, \eqref{eq:one of two}, and \eqref{eq:two of two}, which yields \eqref{eq:3 one}. 

Next, we address \eqref{eq:3 two}. The choices $y_+ = \frac{1}{\sqrt{2}} (e_i + e_h)$ and $y_- = \frac{1}{\sqrt{2}}(e_i + e_h)$  yield
 \begin{align*}
    x_i^2 x_h 
    & = \frac{\sqrt 2}{ 3 } [ \langle x , y_+ \rangle^3 - \langle x, y_-\rangle^3] - \frac{1}{3} \langle x, e_h \rangle ^3 .
 \end{align*}
Applying  \eqref{eq:needed}, \eqref{eq:one of two}, and \eqref{eq:two of two} lead to
 \begin{align*}
   \mathbb E X_i^2 X_h = & \, C_{3,d}^{(1)}  \sum_{j=1}^n \omega_j  ( (P_jX)_i^2 (P_j X)_h + \frac 1 3 (P_j X)_h^3 )\\
      & - \frac 2 3 C_{3,d}^{(2)} \sum_{j=1}^n \sum_{m=1}^d \omega_j (P_jX)_h (P_jX)_m^2 - \frac 1 3 \mathbb E X_h^3
   \end{align*}
   and inserting the expression for $\mathbb E X_h^3$ then reduces this to  \eqref{eq:3 two}. 
   
Finally, for $\mathbb E X_i X_h X_\ell$, we observe 
\begin{equation*}
x_i x_h x_\ell = \frac{1}{24} ( (x_i + x_h +x_\ell)^3 + (x_i - x_h -x_\ell)^3 - (x_i + x_h -x_\ell)^3 - (x_i - x_h +x_\ell)^3 ).
\end{equation*}
By using $y_{+++} = \frac{1}{\sqrt 3} (e_i + e_h + e_\ell)$,
$y_{+--} = \frac{1}{\sqrt 3} (e_i - e_h - e_\ell)$, $y_{--+} = - \frac{1}{\sqrt 3} (e_{i}+e_{j}-e_\ell)$, 
and $y_{-+-} = -\frac{1}{\sqrt 3} (e_i - e_h + e_\ell)$, we obtain
\begin{align*}
 \mathbb E X_i X_h X_\ell & = \frac{ \sqrt 3}{8} \mathbb E[ \langle X, y_{+++}\rangle ^3 
 + \langle X, y_{+--}\rangle ^3 + \langle X, y_{--+}\rangle ^3 + \langle X, y_{-+-}\rangle ^3 ], 
 \end{align*}
 and a calculation using \eqref{eq:needed}, \eqref{eq:one of two}, and \eqref{eq:two of two} leads to \eqref{eq:3 three}.
\end{proof}

\subsection{All moments from projections onto one-dimensional subspaces}
In the previous section, we have outlined the recovery of the moments for $t=1,2,3$ and general $k$. To address all moments $t>3$, we now restrict us to $k=1$.

Let us denote the permutation group of $\{1,\ldots,t\}$ by $S_t$. We say a permutation $s\in S_t$ is associated to a partition $\pi$ and denote this by $s\sim \pi$ if there is a set of cycles $\{c_i\}_{i=1}^m$ such that $s=(c_1)\cdots(c_m)$ and the cardinality of $c_i$ equals $\pi_i$, for $i=1,\ldots,m$. Note that we also use the standard notation $c_i\in s$ for a cycle $c_i$ occurring in $s$. For $\{M_i\}_{i=1}^t\subset \mathscr{H}_d$, we use a cycle index $M_{c_i}:=M_{c_{i,1}}\cdots M_{c_{i,\ell_i}}$, where $c_i=(c_{i,1}\ldots c_{i,\ell_i})$. 

To clarify notation, we provide a simple example.
\begin{example}
For $t=4$, let the permutation $s$ be given by $\begin{pmatrix}
1&2&3&4\\
3&1&2&4
\end{pmatrix}
$, and suppose we have a set of matrices $\{M_i\}_{i=1}^4$. Then $s$ has the cyclic representation $(c_1)(c_2)=(1 3 2)(4)$ and is associated to the partition $(3,1,0,0)$. This implies $M_{c_1}=M_1M_3M_2$ and $M_{c_2}=M_4$.
\end{example}
Due to the orthogonal invariance of the Haar measure, the Grassmannian trace moments are invariant under the orthogonal group $\mathcal{O}_d$, i.e., 
\begin{equation*}
\mu_{k,d}(UM_1,\dots, UM_t) = \mu_{k,d}(M_1,\dots, M_t),\quad\text{for all } U\in\mathcal{O}_d.
\end{equation*}
A general result in invariant theory, cf.~\cite{Procesi:1976ul}, and the invariance of $\mu_{k,d}$ under permutations yield 
\begin{equation}\label{eq:poly sum in general}
\mu_{k,d}(M_1,\dots, M_t)= \sum_{\pi\in\mathscr{P}_t} \alpha_\pi \sum_{\substack{s\in S_t\\  s\sim \pi}} \prod_{c\in s} \trace(M_c), 
\end{equation}
where $\alpha_\pi\in\RR$, see also \eqref{eq:symmi} for $M_1=\ldots=M_t$. 

\begin{proposition}
\label{prop:positive alpha}
For $d \ge t$, $t \in \N_{0}$, and provided that $k=1$, the expansion \eqref{eq:poly sum in general} of the trace moments  
possesses only positive coefficients $\alpha_{\pi}$, $\pi \in \mathscr{P}_{t}$.
\end{proposition}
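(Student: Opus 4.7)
The plan is to use the Gaussian representation of the uniform measure on $\mathbb{S}^{d-1}$, apply Wick's theorem, and read positivity directly off the resulting pairing expansion.

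First, every $P \in \mathcal{G}_{1,d}$ has the form $P = uu^*$ with $u \in \mathbb{S}^{d-1}$, and realizing $u = g/\|g\|$ for $g \sim \mathcal{N}(0, I_d)$, the independence of direction and magnitude of $g$ yields
$$\mu_{1,d}(M_1, \ldots, M_t) = \frac{1}{c_{t,d}}\, \mathbb{E}\bigl[(g^* M_1 g) \cdots (g^* M_t g)\bigr], \qquad c_{t,d} := d(d+2) \cdots (d+2t-2) > 0.$$

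Second, I would expand $g^* M_l g = \sum_{i,j} (M_l)_{ij} g_i g_j$ and apply Isserlis' theorem to the resulting Gaussian moment. This produces a sum over perfect matchings $p$ of the $2t$ index positions $\{i_1, j_1, \ldots, i_t, j_t\}$, each matching contributing $+1$ times a product of Kronecker deltas. Combined with the $t$ intrinsic $M$-pairs $\{i_l, j_l\}$, each matching $p$ yields a $2$-regular multigraph that decomposes into cycles; a cycle through the $M$-edges $M_{l_1}, \ldots, M_{l_\ell}$ in cyclic order contributes the single trace $\trace(M_{l_1} \cdots M_{l_\ell})$, which is well defined up to cyclic rotation and, since each $M_l$ is symmetric, also under reversal. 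Consequently
$$c_{t,d}\, \mu_{1,d}(M_1, \ldots, M_t) = \sum_p \prod_{c \in p} \trace(M_c),$$
a sum of trace monomials with manifestly non-negative integer coefficients.

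Third, I would regroup pairings by their cycle-type partition $\pi \in \mathscr{P}_t$. The $S_t$-relabeling action on the $M_l$'s permutes pairings and sends those associated to a permutation $s \sim \pi$ to those associated to $\tau s \tau^{-1}$, so the Wick coefficient of $\prod_{c \in s} \trace(M_c)$ depends only on $\pi$. This coefficient is strictly positive because one can exhibit a pairing of any prescribed cycle type $\pi$ (for each part $\pi_j$, take $\pi_j$ $M$-edges and connect them into a single cycle via the obvious Wick-pairing). The hypothesis $d \ge t$ is used in the last step to invoke the first fundamental theorem of invariant theory for $\mathcal{O}_d$, which guarantees that the trace-product expansion in \eqref{eq:poly sum in general} is unique; matching with the Wick sum then identifies $\alpha_\pi$ as a positive Wick count divided by a positive multiplicity and by $c_{t,d}$, whence $\alpha_\pi > 0$.

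The main obstacle will be the final bookkeeping: trace products $\prod_{c \in s} \trace(M_c)$ associated to different permutations $s$ of the same cycle type can coincide (cyclic rotation and reversal of each cycle give the same trace by symmetry of the $M_l$), and one has to match these coincidences correctly between the Wick expansion (indexed by pairings) and \eqref{eq:poly sum in general} (indexed by permutations) to isolate $\alpha_\pi$. Once this is straightened out, every quantity in the resulting expression for $\alpha_\pi$ is a positive integer over the positive denominator $c_{t,d}$, so positivity is automatic.
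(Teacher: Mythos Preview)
Your approach is correct and genuinely different from the paper's. The paper does not use the Gaussian representation or Wick's theorem at all; instead, for each target partition $\pi$ it fixes a permutation $\sigma\sim\pi$ and plugs in the explicit rank-$2$ test matrices $M_i=e_ie_{\sigma(i)}^*+e_{\sigma(i)}e_i^*$ (which is where $d\ge t$ enters). A short combinatorial computation shows that $\prod_{c\in s}\trace(M_c)$ vanishes unless every cycle of $s$ or its inverse is a cycle of $\sigma$, so the entire right-hand side of \eqref{eq:poly sum in general} collapses to $\alpha_\pi$ times a positive integer. The left-hand side is then evaluated directly as $\int_{\mathcal O_d}\prod_i 2^{\#\{i,\sigma(i)\}}O_{1,i}^2\,dO>0$, and positivity of $\alpha_\pi$ follows. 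Your Wick argument is more structural and explains \emph{why} all the coefficients are positive (they are pairing counts divided by $c_{t,d}$), whereas the paper's test-matrix trick is more elementary and completely self-contained---it needs no external linear-independence input, since the specific $M_i$ kill all competing terms outright.

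One point to tighten: what you need at the end is not the first fundamental theorem (which only asserts that traces generate the invariants) but uniqueness of the coefficients $\alpha_\pi$, i.e.\ linear independence of the symmetrized sums $\sum_{s\sim\pi}\prod_{c\in s}\trace(M_c)$ as $\pi$ varies. The cleanest way to get this under $d\ge t$ is to restrict to the diagonal $M_1=\cdots=M_t=M$, where these sums become positive multiples of the power-sum products $\prod_i\trace(M^{\pi_i})$; those are linearly independent on $\mathscr H_d$ for $d\ge t$ by elementary symmetric-function theory. With that in place, your matching step goes through and the bookkeeping you flag (the multiplicities $n_W(T)$ and $n_\pi(T)$ both depend only on $\pi$, by the transitive $S_t$-action on trace products of a given type) is routine.
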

\begin{proof}
For any fixed permutation $\sigma:\{1,\dots,t\}\to\{1,\dots,t\}$, we consider the matrices
\[
M_{i} = 
\begin{cases}
e_{i}e_{\sigma(i)}^{*} + e_{\sigma(i)
}e_{i}^{*},& i\ne\sigma(i) \\  
e_{i}e_{i}^{*},& i = \sigma(i),
\end{cases}, \qquad i=1,\dots,t,
\]
where $\{e_{i}\}_{i=1}^d \in \R^{d}$ is the standard basis. 

Now, let $s \in S_{t}$ be another arbitrary permutation with some cycle $c \in s$. We denote the cardinality of $c$ by $l$. From
\[
\trace(M_{c}) = \trace( M_{c_{1}} \cdots M_{c_{l}} )
 = \sum_{k_{1},\dots,k_{l}=1}^{d} (M_{c_{1}})_{k_{1},k_{2}} (M_{c_{2}})_{k_{2},k_{3}} \cdots (M_{c_{l-1}})_{k_{l-1},k_{l}}(M_{c_{l}})_{k_{l},k_{1}} 
\]
we conclude by the definition of $M_{i}$ that the indices $k_{i}$ contribute to
the sum if and only if
$k_{i}, k_{i+1\, \mathrm{mod}\, l} \in \{ c_{i}, \sigma(c_{i}) \}$ for all
$i=1,\dots,l$. Equivalently, it must hold that
$k_{i} \in \{ c_{i}, \sigma(c_{i}) \} \cap \{ c_{i-1\,\mathrm{mod}\,l},
\sigma(c_{i-1\,\mathrm{mod}\,l}) \}$,
$i=1,\dots,l$. Since $c_{i} \ne c_{j}$ and $\sigma(c_{i}) \ne \sigma(c_{j})$ for
$i \ne j$, this can only happen if $c_{i} = \sigma(c_{i-1\,\mathrm{mod}\,l})$
for all $i=1,\dots,l$ or $c_{i-1\,\mathrm{mod}\,l} = \sigma(c_{i})$ for all
$i=1,\dots,l$. Hence, the trace of $M_{c}$ vanishes if and only if neither the
cycle $c$ nor its inverse $c^{-1}$ are contained in $\sigma$. More precisely,
\[
\trace(M_{c}) = 
\begin{cases}
1, & c \in \sigma \text{ or } c^{-1} \in \sigma,\\
0, & \text{else}.
\end{cases}
\]

Using these observations we obtain
\[
\begin{aligned}
\mu_{1,d}(M_{1},\dots,M_{t}) & =  \sum_{\pi\in\mathscr{P}_t} \alpha_\pi \sum_{\substack{s\in S_t\\  s\sim \pi}} \prod_{c\in s} \trace(M_c) \\
& =  \alpha_{\pi_\sigma} \#\{ s \in S_{t} : c \text{ or } c^{-1} \in \sigma, \forall c\in s\},
\end{aligned}
\]
where $\pi_\sigma$ is the partition associated to $\sigma$. Hence, $\pi_\sigma$ is a fraction of the trace moment
$\mu_{1,d}(M_{1},\dots,M_{t})$. It remains to verify that the latter is positive. 

Together with the definition of the trace moments $\mu_{1,d}(M_{1},\dots,M_{t})$ and those of $M_{i}$ we arrive at
\begin{align*}
\mu_{1,d}(M_{1},\dots,M_{t}) 
& = \int_{\mathcal O_{d}} \prod_{i=1}^{t} \langle O D_{1} O^{*}, M_{i} \rangle d O \\
& = \int_{\mathcal O_{d}} \prod_{i=1}^{t} \langle O e_{1} (O e_{1})^{*}, M_{i} \rangle d O \\
& = \int_{\mathcal O_{d}} \prod_{i=1}^{t} 2^{\#\{i,\sigma(i)\}} O_{1,i} O_{1,\sigma(i)} d O \\
& = \int_{\mathcal O_{d}} \Big(\prod_{i=1}^{t} 2^{\#\{i,\sigma(i)\}} O_{1,i}\Big) \Big(\prod_{i=1}^{t} O_{1,\sigma(i)} \Big)d O. 
\intertext{Since $\sigma$ is a permutation, we obtain}
\mu_{1,d}(M_{1},\dots,M_{t}) & =  \int_{\mathcal O_{d}} \Big(\prod_{i=1}^{t} 2^{\#\{i,\sigma(i)\}} O_{1,i}\Big) \Big(\prod_{i=1}^{t} O_{1,i} \Big)d O\\ 
&=\int_{\mathcal O_{d}} \prod_{i=1}^{t} 2^{\#\{i,\sigma(i)\}} (O_{1,i})^{2} d O > 0
\end{align*}
and the assertion follows.
\end{proof}

\begin{proposition}\label{th:ground breaking}
  For fixed $m,\ell \in \N_{0}$ and $d \in \N$ with $d \ge m + \ell$ there are
  coefficients $\{a_i\}_{i=0}^{\lfloor m/2\rfloor}\in\R$ such that, for all
  $x\in\R^d$, $y \in \mathbb{S}^{d-1}$, it holds
\begin{equation}
\label{eq:ground breaking}
\langle x,y\rangle^m\|x\|^{2\ell} =  \sum_{i=0}^{\lfloor m/2\rfloor} a_i \mu^{(m-2i,\ell+i)}_{1,d}(E_{x,y},xx^*).
\end{equation}
\end{proposition}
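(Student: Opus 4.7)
The plan is to exploit the rank-one structure ($k=1$) through an integral representation of the trace moments and then reduce the target identity to a dimension-counting/linear-independence argument carried out via spherical harmonics.

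First, I would parametrize $\mathcal{G}_{1,d}$ by $P=zz^{*}$ with $z\in\mathbb{S}^{d-1}$; using $\langle zz^{*},E_{x,y}\rangle=\langle z,x\rangle\langle z,y\rangle$ and $\langle zz^{*},xx^{*}\rangle=\langle z,x\rangle^{2}$ together with the definition of the Grassmannian trace moments gives
\[
F_{i}:=\mu_{1,d}^{(m-2i,\ell+i)}(E_{x,y},xx^{*})=\int_{\mathbb{S}^{d-1}}\langle z,x\rangle^{m+2\ell}\langle z,y\rangle^{m-2i}\,d\sigma(z),
\]
where $\sigma$ denotes the uniform probability measure on $\mathbb{S}^{d-1}$. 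Orthogonal invariance of $\sigma$ and $\|y\|=1$ imply that each $F_{i}$ is a polynomial in $\|x\|^{2}$ and $\langle x,y\rangle$; counting the degree in $x$ (namely $m+2\ell$) and the maximal $y$-degree ($m-2i$) places every $F_{i}$ inside
\[
V:=\spann\bigl\{\langle x,y\rangle^{j}\|x\|^{m+2\ell-j}:0\le j\le m,\ j\equiv m\pmod{2}\bigr\},
\]
which has dimension $\lfloor m/2\rfloor+1$ and contains the target $\langle x,y\rangle^{m}\|x\|^{2\ell}$. Hence it suffices to prove that the $\lfloor m/2\rfloor+1$ functions $F_{0},\dots,F_{\lfloor m/2\rfloor}$ are linearly independent in $V$; they will then form a basis and the coefficients $\{a_{i}\}$ will be determined uniquely by the resulting triangular linear system (triangular because $F_{i}$ has $\langle x,y\rangle$-degree at most $m-2i$).

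For the linear independence, assume $\sum_{i}a_{i}F_{i}\equiv 0$ and set $p(t):=\sum_{i}a_{i}t^{m-2i}$. The integral representation converts the hypothesis into
\[
\int_{\mathbb{S}^{d-1}}\langle z,x\rangle^{m+2\ell}\,p(\langle z,y\rangle)\,d\sigma(z)=0\qquad\text{for all }x\in\R^{d},\ y\in\mathbb{S}^{d-1}.
\]
Fix such a $y$. As $x$ ranges over $\R^{d}$, the functions $z\mapsto\langle z,x\rangle^{m+2\ell}$ span, by polarization, the restriction to $\mathbb{S}^{d-1}$ of all homogeneous polynomials of degree $m+2\ell$, and this subspace coincides with the harmonic direct sum $\bigoplus_{k}\mathcal{H}_{k}$ over $0\le k\le m+2\ell$ with $k\equiv m\pmod 2$. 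On the other hand, $z\mapsto p(\langle z,y\rangle)$ is a polynomial in $z$ of degree at most $m$ whose spherical harmonic components live only in $\mathcal{H}_{k}$ with $k\le m$ and $k\equiv m\pmod 2$, hence already inside that same subspace. An element of a subspace that is orthogonal to that subspace must vanish, so $p(\langle z,y\rangle)=0$ for every $z,y\in\mathbb{S}^{d-1}$; letting $\langle z,y\rangle$ sweep through $[-1,1]$ forces $p\equiv 0$, so $a_{i}=0$ for all $i$.

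The main obstacle is the harmonic-decomposition step: one has to verify that the span of $\{\langle z,x\rangle^{m+2\ell}\}_{x\in\R^{d}}$ on $\mathbb{S}^{d-1}$ indeed equals the parity-restricted direct sum of harmonic spaces up to degree $m+2\ell$, and that no relevant $\mathcal{H}_{k}$ degenerates, for which the assumption $d\ge m+\ell$ (already in force in Proposition~\ref{prop:positive alpha}) is convenient. With that in hand, the conclusion of Proposition~\ref{th:ground breaking} follows by pure linear algebra, and the triangular structure mentioned above provides, if desired, explicit recursive formulas for the coefficients $a_{i}$.
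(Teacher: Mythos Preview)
Your proof is correct and takes a genuinely different route from the paper. The paper argues by induction on $m$: it expands each $\mu_{1,d}^{(m,\ell)}(E_{x,y},xx^*)$ as $b_0^{(m,\ell)}\langle x,y\rangle^m\|x\|^{2\ell}$ plus terms of lower $\langle x,y\rangle$-degree, then invokes Proposition~\ref{prop:positive alpha} (positivity of all coefficients $\alpha_\pi$ in the invariant-theory expansion when $k=1$) to guarantee that the leading coefficient $b_0^{(m,\ell)}$ is strictly positive, and solves for $\langle x,y\rangle^m\|x\|^{2\ell}$ recursively. Your approach instead exploits the rank-one integral representation $F_i=\int_{\mathbb{S}^{d-1}}\langle z,x\rangle^{m+2\ell}\langle z,y\rangle^{m-2i}\,d\sigma(z)$ directly and establishes linear independence of the $F_i$ in one stroke via spherical-harmonic orthogonality, bypassing both the induction and Proposition~\ref{prop:positive alpha} entirely. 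What the paper's method buys is an explicit recursion for the $a_i$ tied to the structural coefficients $\alpha_\pi$; what your method buys is a cleaner, self-contained argument that in fact only needs $d\ge 2$ rather than $d\ge m+\ell$ (your hedging about the harmonic-decomposition step is unnecessary: for $d\ge 2$ every $\mathcal{H}_k$ is nontrivial, and the identification of the span of $\{\langle\,\cdot\,,x\rangle^{n}\}_x$ on the sphere with the parity-restricted harmonic sum is standard).
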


\begin{proof}
  Let us first note that by the identity \eqref{eq:poly sum in general} the
  trace moments $\mu^{(m,\ell)}_{1,d}(E_{x,y},xx^{*})$, $m,\ell \in \N_{0}$, can
  be written as polynomials in $\langle x, y \rangle$, $\|x\|^{2}$ and
  $\|y\|^{2}$. Hence, together with the homogeneity in $x$ and $y$ we infer the
  representation
  \begin{equation}
    \label{eq:mu Exy as scalar product}
    \begin{aligned}
      \mu^{(m,\ell)}_{1,d}(E_{x,y},xx^*) 
      = \sum_{i=0}^{\lfloor m/2 \rfloor}b_i^{(m,\ell)}\langle x,y\rangle^{m-2i}\|x\|^{2(i+\ell)},\qquad x \in \R^{d}, y \in \mathbb{S}^{d-1},
    \end{aligned}
  \end{equation}
  for some coefficients $b_{i}^{(m,\ell)} \in \R$. Moreover, we have 
  \begin{equation}
    \label{eq:b0_m_l}
    b_{0}^{(m,\ell)} > 0, \qquad  d \ge m + \ell, 
  \end{equation}
  which follows from Proposition \ref{prop:positive alpha} and the fact that the
  coefficients of $\langle x,y \rangle^{m} \| x \|^{2\ell}$ in any term of the
  form
  \[
  \prod_{i=1}^{r}\trace\big(\prod_{j=1}^{s_{i}} (E_{x,y})^{m_{i,j}}
  (xx^{*})^{\ell_{i,j}}\big), \quad \text{ with } \quad \sum_{i=1}^{r}\sum_{j=1}^{s_{i}}
  m_{i,j} = m, \quad \sum_{i=1}^{r} \sum_{j=1}^{s_{i}} \ell_{i,j} = \ell
  \]
  are positive.

  Now, the statement \eqref{th:ground breaking} will follow by induction over
  $m$. Therefore, let $m \ge 2$, $\ell \in \N_{0}$ with $d \ge m + \ell$ be
  given and assume that the statement \eqref{th:ground breaking} holds for all
  $m' = m-2i$, $\ell'=\ell + i$, $i=1,\dots,\lfloor m/2 \rfloor$. Using equation
  \eqref{eq:mu Exy as scalar product} we obtain
  \[
  \mu^{(m,\ell)}_{1,d}(E_{x,y},xx^*) 
  = b_{0}^{(m,\ell)} \langle x,y\rangle^{m} \|x\|^{2 \ell} +  \sum_{i=1}^{\lfloor m/2 \rfloor}b_i^{(m,\ell)}\langle x,y\rangle^{m-2i}\|x\|^{2(\ell+i)}.
  \]
  Since $d \ge m + \ell > m' + \ell' = m + \ell - i$,
  $i=1,\dots,\lfloor m/2 \rfloor$, we can expand the sum on the right hand side
  by the induction hypothesis into trace moments $\mu_{1,d}^{(m-2i,\ell+i)}(E_{x,y},xx^*)$,
  $i=1,\dots, \lfloor m/2 \rfloor$. Hence, using $b^{(m,\ell)}_{0} \ne 0$, see \eqref{eq:b0_m_l}, we can rearrange terms and arrive at the statement
  \eqref{eq:ground breaking}. It remains to show the induction base with the
  cases $m \in \{0,1\}$, $\ell \in \N_{0}$, $d \ge m + \ell$.

  For $m=0$, $\ell \in \N_{0}$ and $d \in \N$ we observe by orthogonal
  invariance
  \begin{equation*}
    \mu^{(0,\ell)}_{1,d}(E_{x,y},xx^*) = \mu^{\ell}_{1,d} (xx^{*})= \mu^{\ell}_{1,d}(D_{1})\|x\|^{2\ell} ,\qquad x\in\R^d.
  \end{equation*}
The term $\mu^{\ell}_{1,d}(D_{1})$ is positive and has been explicitely computed in \cite{Bachoc:2010aa}:
  \begin{equation*}
    \mu^{\ell}_{1,d}(D_1)=\frac{(1/2)_\ell}{(d/2)_\ell},\qquad (a)_\ell:=a(a+\ell)\cdots (a+\ell-1).
  \end{equation*}
  Hence, the assertion follows for $m=0$, $\ell \in \N_{0}$.

  For $m=1$, $\ell \in \N_{0}$ and $d \in \N$ we find by \eqref{eq:poly sum in
    general}  that
  \begin{equation*}
    \mu^{(1,\ell)}_{1,d}(E_{x,y},xx^*) = \langle x,y\rangle \|x\|^{2\ell} \sum_{\pi\in\mathscr{P}_{\ell+1}} \alpha_\pi \sum_{\substack{s\in S_{\ell + 1}\\  s\sim \pi}} 1, \qquad x\in\R^d,\quad y \in \mathbb{S}^{d-1}.
  \end{equation*} 
  Moreover, we can check that the coefficient of $\langle x,y\rangle \|x\|^{2\ell} $ is nonzero by observing
  \begin{align*}
    \sum_{\pi\in\mathscr{P}_{t}} \alpha_\pi \sum_{\substack{s\in S_{\ell+1}\\  s\sim \pi}} 1 = \mu^{\ell+1}_{1,d}(I_1)>0,
  \end{align*}
  which concludes the proof.
\end{proof}

\begin{corollary}\label{cor:zweiter Versuch}
If $\{(P_j,\omega_j)\}_{j=1}^n$ is a cubature for $\Pol^3_{t}(\mathcal{G}_{1,d})$, then, for $\alpha\in\N^d$ with $|\alpha|=t\leq d$, there are coefficients $a^\alpha_\beta\in\R$, such that, for any random vector $X\in \R^d$,
\begin{equation}\label{eq:fundament 2}
\mathbb{E}X^\alpha =\sum_{|\beta|= t}  a^\alpha_\beta \sum_{j=1}^n \omega_j\mathbb{E} (P_jX)^\beta.
\end{equation}
\end{corollary}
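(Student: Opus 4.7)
My strategy combines three ingredients: the multinomial polarization identity \eqref{eq:lemma}, the expansion of $\langle x,y\rangle^t$ supplied by Proposition~\ref{th:ground breaking}, and the cubature property of $\{(P_j,\omega_j)\}_{j=1}^n$.

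As a first step, I would use the polarization identity \eqref{eq:lemma combinatorial basis} to write $X^\alpha = X_{i_1}\cdots X_{i_t}$ as a finite linear combination of terms $\langle X, z_J\rangle^t$ with $z_J = \sum_{j\in J} e_{i_j}$. After normalizing each nonzero $z_J$ to a unit vector $y_J$, the task reduces to showing that, for every $y\in \mathbb{S}^{d-1}$, the quantity $\mathbb{E}\langle X,y\rangle^t$ can be expressed as a linear combination of moments $\mathbb{E}(P_jX)^\beta$ with $|\beta|=t$.

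Next, with $m=t$ and $\ell=0$, admissible because $t\le d$, Proposition~\ref{th:ground breaking} provides coefficients $a_i\in\R$ such that
\[
\langle x,y\rangle^t = \sum_{i=0}^{\lfloor t/2\rfloor} a_i \, \mu^{(t-2i,\,i)}_{1,d}(E_{x,y},\, xx^*), \qquad x\in\R^d,\ y\in \mathbb{S}^{d-1}.
\]
The main obstacle is to verify that, for fixed $x$ and $y$, the map $f_i : P \mapsto \langle P, E_{x,y}\rangle^{t-2i}\langle P, xx^*\rangle^{i}$ belongs to $\Pol^3_t(\mathcal{G}_{1,d})$. I would establish this by applying the polarization identity \eqref{eq:lemma} to the product $\prod_{r=1}^t \langle P, A_r\rangle$ in which $A_1=\cdots=A_{t-2i}=E_{x,y}$ and $A_{t-2i+1}=\cdots=A_t=xx^*$. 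This rewrites $f_i$ as a linear combination of functions $P\mapsto \langle P,\, \alpha E_{x,y} + \beta xx^*\rangle^t$ for various nonnegative integers $\alpha,\beta$. Since $\rank(E_{x,y})\le 2$ and $\rank(xx^*)\le 1$, the matrix $\alpha E_{x,y}+\beta xx^*$ has rank at most $3$, so $f_i|_{\mathcal{G}_{1,d}} \in \Pol^3_t(\mathcal{G}_{1,d})$. This rank-$3$ threshold is precisely what forces the hypothesis on $\{(P_j,\omega_j)\}_{j=1}^n$ in the corollary.

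With $f_i \in \Pol^3_t(\mathcal{G}_{1,d})$ in hand, the cubature property yields
\[
\mu^{(t-2i,\,i)}_{1,d}(E_{x,y},\, xx^*) = \sum_{j=1}^n \omega_j \, \langle P_j x, y\rangle^{t-2i}\, \|P_j x\|^{2i},
\]
where I have used $\langle P_j, E_{x,y}\rangle = \langle P_jx, y\rangle$ and $\langle P_j, xx^*\rangle = \|P_jx\|^2$. Each summand on the right is a homogeneous polynomial of total degree exactly $t$ in the coordinates of $P_jx$, so passing to expectations and assembling the three steps produces the representation \eqref{eq:fundament 2}. Apart from the rank-$3$ verification for $f_i$, the remaining work is routine bookkeeping of coefficients.
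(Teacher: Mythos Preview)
Your proposal is correct and mirrors the paper's proof: it invokes \eqref{eq:lemma combinatorial basis}, applies Proposition~\ref{th:ground breaking} with $m=t$, $\ell=0$, verifies that $P\mapsto \langle P,E_{x,y}\rangle^{t-2i}\langle P,xx^*\rangle^{i}$ lies in $\Pol^3_t(\mathcal{G}_{1,d})$, and concludes via the cubature property. The only difference is that the paper cites the product rule in Proposition~\ref{prop:to compare with} for the $\Pol^3_t$ membership, whereas you unwind that step directly via polarization.

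One bookkeeping slip to fix: the function $f_i$ has degree $t-i$ in $P$, not $t$, so your polarization should run over $t-i$ factors (with $i$ copies of $xx^*$, not $2i$), yielding terms $\langle P,\alpha E_{x,y}+\beta xx^*\rangle^{t-i}$. This is harmless for the conclusion since $\Pol^3_{t-i}(\mathcal{G}_{1,d})\subset \Pol^3_t(\mathcal{G}_{1,d})$ by definition~\eqref{def:2}.
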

\begin{proof}
For any $i=0,\ldots,\lfloor t/2\rfloor$, the function $F:\mathcal{G}_{1,d}\rightarrow\R$ defined by  $P\mapsto \langle P,E_{x,y}\rangle^{t-2i}\langle P,xx^*\rangle^{2i}$ is contained in $\Pol^3_{t}(\mathcal{G}_{1,d})$, see part two of Proposition \ref{prop:to compare with} in the subsequent Section \ref{section:approx} for details.  Thus, the cubature property yields 
\begin{align*}
\mu^{(t-2i,i)}_{1,d}(E_{x,y_j},xx^*)&=\sum_{j=1}^n \omega_j \langle P_j,E_{x,y}\rangle^{t-2i} \langle P_j,xx^*\rangle^{i}\\
&= \sum_{j=1}^n \omega_j \langle P_jx,y\rangle^{t-2i} \|P_jx\|^{2i}.
\end{align*}
Note that $\langle P_jx,y\rangle^{t-2i}$ and $\|P_jx\|^{2i}$ are linear combinations of monomials in $P_jx$ of degree $t-2i$ and $2i$, respectively. Hence, their product is a linear combination of monomials in $P_jx$ of degree $t$. Applying \eqref{eq:lemma combinatorial basis} and invoking Proposition \ref{th:ground breaking} for $\ell=0$ concludes the proof. 
\end{proof}


\section{Constructing cubatures for $\Pol^{2}_{t}(\mathcal{G}_{k,d})$}\label{section:approx}
In this section we shall derive a general framework for the construction of cubatures for $\Pol^{2}_{t}(\mathcal{G}_{k,d})$ that are needed to apply our results in Theorem \ref{main rec theorem}. For general existence results of cubatures, we refer to \cite{Harpe:2005fk}, and explicit group theoretical constructions are provided in \cite{Calderbank:1997uq}. 
In the following we shall discuss random constructions as well as deterministic constructions based on the solution of an optimization problem.

\subsection{Random construction}\label{sec:random negative weights}
For $n,m\geq \dim({\Pol}^\ell_{t}(\mathcal{G}_{k,d}))$, it follows from classical arguments that there are 
$\{M_i\}_{i=1}^m\subset \mathscr{H}^\ell_d$ and $\{P_j\}_{j=1}^n\subset \mathcal{G}_{k,d}$ such that the matrix 
\begin{equation}\label{eq:matrix}
(\langle M_i,P_j\rangle^t)_{\substack{i=1\ldots,m\\ j=1,\ldots,n}}
\end{equation}
has rank $\dim({\Pol}^\ell_{t}(\mathcal{G}_{k,d}))$. We can now compute weights $\omega:=\{\omega\}_{j=1}^n$ by solving the linear system of equations
\begin{equation*}
\sum_{j=1}^n\langle M_i,P_j\rangle^t \omega_j =  \mu_{k,d}^t(M_i),\quad i=1,\ldots,m,
\end{equation*}
which yields a cubature $\{(P_j,\omega_j)\}_{j=1}^n$. Note that the weights $\{\omega_j\}_{j=1}^n$ are not necessarily nonnegative. 

We claim that $M_i$ and $P_j$ can be chosen in a random fashion. Indeed, we observe that both spaces, $\mathcal{G}_{k,d}$ and $\mathscr{H}^\ell_d$, can be parametrized analytically, so that there is $D>0$ and a surjective analytic mapping $F:\R^D\rightarrow (\mathscr{H}^\ell_d)^m\times (\mathcal{G}_{k,d})^n$. Let us assume $n=m=\dim({\Pol}^\ell_{t}(\mathcal{G}_{k,d}))$ for simplicity. Otherwise, we can extract a submatrix. We now define 
\begin{align*}
G:(\mathscr{H}^\ell_d)^n\times (\mathcal{G}_{k,d})^n & \rightarrow \R\\
\big((P_i)_{i=1}^n , (M_j)_{j=1}^n \big)& \mapsto \det\big((\langle P_i,M_j\rangle^t)_{i,j}\big).
\end{align*}
Since $F$ is surjective, the mapping $G\circ F:\R^D\rightarrow \R$ is not identically zero. 
Moreover, $G\circ F$ is analytic, so that $(G\circ F)^{-1}(\{0\})\subset\R^D$ has Lebesgue measure zero and, hence, is a zero set with respect to any continuous probability measure $\nu$ on $\R^D$. Thus, the parametrization $F$ enables a random choice in $(\mathscr{H}^\ell_d)^m\times (\mathcal{G}_{k,d})^n$, so that the matrix \eqref{eq:matrix} has rank $\dim({\Pol}^\ell_{t}(\mathcal{G}_{k,d}))$ with probability one with respect to $\nu$. In other words, $G^{-1}(\{0\})$ is a zero set with respect to the induced probability measure $\nu_F$ on $(\mathscr{H}^\ell_d)^m\times (\mathcal{G}_{k,d})^n$. Thus, \eqref{eq:matrix} has rank $\dim({\Pol}^\ell_{t}(\mathcal{G}_{k,d}))$ with probability one and weights $\{\omega_j\}_{j=1}^n$ can be computed.



Let us also verify that \eqref{eq:matrix} having rank $\dim({\Pol}^\ell_{t}(\mathcal{G}_{k,d}))$ is a generic property. Indeed, 
both spaces, $\mathcal{G}_{k,d}$ and $\mathscr{H}^\ell_d$, are real algebraic varieties that are irreducible, cf.~\cite{Bochnak:1998kx,Kutz:1974eu}, so that also $(\mathscr{H}^\ell_d)^m\times (\mathcal{G}_{k,d})^n$ is irreducible. Without loss of generality, we can restrict us to $n=m=\dim({\Pol}^\ell_{t}(\mathcal{G}_{k,d}))$ again. Note that $G$ 
is a polynomial map and, hence, is Zariski continuous. Therefore, the set $U:=\{u\in (\mathscr{H}^\ell_d)^m\times (\mathcal{G}_{k,d})^n : G(u)\neq 0\}$ is Zariski open. Classical arguments yield that $U$ cannot be empty, so that irreducibility yields that $U$ is Zariski dense. Thus, we have verified that, for $n,m\geq \dim({\Pol}^\ell_{t}(\mathcal{G}_{k,d}))$, there is a nonempty Zariski open and dense subset $U$ in $(\mathscr{H}^\ell_d)^m\times (\mathcal{G}_{k,d})^n$ such that the matrix 
\begin{equation*}
(\langle M_i,P_j\rangle^t)_{\substack{i=1\ldots,m\\ j=1,\ldots,n}},\quad \text{ where} \quad \big((M_i)_{i=1}^m,(P_j)_{j=1}^n\big)\in U,
\end{equation*}
has rank $\dim({\Pol}^\ell_{t}(\mathcal{G}_{k,d}))$.

\subsection{Deterministic construction}\label{sec:4.2}
Here, we present the design of cubatures as the solution of an optimization problem. As in \cite{Ehler:2014zl}, we shall apply the theory of reproducing kernel Hilbert spaces. We first define a measure $\nu_{\ell,d}$ on $\mathscr{H}^\ell_d:=\{M\in\mathscr{H}_d: \rank(M)\leq \ell\}$ by 
\begin{equation*}
\nu_{\ell,d}(\mathcal{A}):=\int_{\mathbb{S}^{\ell-1}}  \int_{\mathcal{O}_d} 1_{\mathcal{A}}(O^*\diag(\lambda_1,\ldots,\lambda_\ell,0,\ldots,0)O) d\lambda dO \, ,
\end{equation*}
where $1_{\mathcal A}$ is the indicator function of the set $\mathcal A$.
It is not hard to see that the mapping
\begin{align*}
K^\ell_t:\mathcal{G}_{k,d}\times \mathcal{G}_{k,d}& \rightarrow \R \\
(P_1,P_2) & \mapsto \int_{\mathscr{H}^\ell_{d}}\langle P_1,M\rangle ^t\langle M,P_2\rangle^t d\nu_{\ell,d}(M)
\end{align*}
is a positive definite kernel on $\mathcal{G}_{k,d}$. Next, we check that the function spaces under consideration are spanned by the shifts of $K^\ell_t$. 
\begin{proposition}\label{prop:to compare with}
If $\ell$ and $t$ are nonnegative integers, then 
\begin{align*}
\Pol_t^\ell(\mathcal{G}_{k,d}) 
&= \spann\{\langle M,\cdot\rangle^t\big|_{\mathcal G_{k,d}} : M\in\mathscr{H}_d,\;\rank(M)=\ell\}\\
&= \spann\{K^\ell_t(P,\cdot)|_{\mathcal{G}_{k,d}} : P\in\mathcal{G}_{k,d} \}.\\
\intertext{If $\ell_1,\ell_2$ and $t_1,t_2$ are nonnegative integers, then}
\Pol^{\ell_1}_{t_1}(\mathcal{G}_{k,d}) \cdot \Pol^{\ell_2}_{t_2}(\mathcal{G}_{k,d}) &\subset \Pol^{\ell_1+\ell_2}_{t_1+t_2}(\mathcal{G}_{k,d}).
\end{align*}

\end{proposition}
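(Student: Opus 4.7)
The plan is to handle the two parts of the proposition separately. Part (2), the multiplicative inclusion, is purely algebraic and is proved directly by polarization. Part (1), the double equality, is proved via a chain of inclusions; the kernel representation is a duality argument modulo the first identity, while the first identity reduces to a spectral/representation-theoretic calculation that is the main difficulty.

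For the multiplicative inclusion, by bilinearity it suffices to verify the containment on generators: for $\rank(M_i)\le\ell_i$ and $s_i\le t_i$, the product $\langle M_1,\cdot\rangle^{s_1}\langle M_2,\cdot\rangle^{s_2}|_{\mathcal{G}_{k,d}}$ should lie in $\Pol^{\ell_1+\ell_2}_{t_1+t_2}(\mathcal{G}_{k,d})$. I apply the polarization identity \cite[Lemma~7.1]{Ehler:2014qc} (equation~\eqref{eq:lemma} in the paper) to the product of $s_1+s_2$ linear forms $\langle L_1,P\rangle\cdots\langle L_{s_1+s_2},P\rangle$ in which $M_1$ appears $s_1$ times and $M_2$ appears $s_2$ times. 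The identity rewrites this product as a signed sum of $\langle N_J,P\rangle^{s_1+s_2}$ with $N_J=\sum_{j\in J}L_j$ a $\{0,1\}$-combination of $M_1,M_2$; hence $\rank(N_J)\le\rank(M_1)+\rank(M_2)\le\ell_1+\ell_2$, and the common exponent satisfies $s_1+s_2\le t_1+t_2$, placing each summand and thus the product in $\Pol^{\ell_1+\ell_2}_{t_1+t_2}(\mathcal{G}_{k,d})$.

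For part (1), set $A:=\spann\{\langle M,\cdot\rangle^t|_{\mathcal{G}_{k,d}}:\rank(M)=\ell\}$ and $B:=\spann\{K^\ell_t(P,\cdot)|_{\mathcal{G}_{k,d}}:P\in\mathcal{G}_{k,d}\}$, and I plan to prove the chain $\Pol^\ell_t(\mathcal{G}_{k,d})\supseteq A\supseteq B\supseteq\Pol^\ell_t(\mathcal{G}_{k,d})$. The first inclusion is definitional. For $A\supseteq B$, I use that $\nu_{\ell,d}$ concentrates on the exact-rank-$\ell$ stratum of $\mathscr{H}^\ell_d$ (its eigenvalue parameter lives on $\mathbb{S}^{\ell-1}$ and vanishes in a coordinate only on a measure-zero set); the integral defining $K^\ell_t(P,\cdot)$ is then a $\nu_{\ell,d}$-weighted superposition of rank-$\ell$ degree-$t$ generators, and closedness of the finite-dimensional subspace $A$ forces the integral into $A$. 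For the essential inclusion $B\supseteq\Pol^\ell_t(\mathcal{G}_{k,d})$ I use a duality argument: a linear functional $\beta$ on $\Pol^\ell_t(\mathcal{G}_{k,d})$ vanishing on every $K^\ell_t(P,\cdot)$ lifts along $M\mapsto\langle M,\cdot\rangle^t$ to a function $g$ on $\mathscr{H}^\ell_d$ satisfying $\int\langle P,M\rangle^t g(M)\,d\nu_{\ell,d}(M)=0$ for every $P\in\mathcal{G}_{k,d}$. Combining $\mathcal{O}(d)$-equivariance of $\nu_{\ell,d}$ with the isotypic decomposition of $\Pol^\ell_t(\mathcal{G}_{k,d})$ into irreducible harmonic subspaces $H^k_\lambda$ indexed by partitions of length at most $\ell$ (cf.~the Grassmannian harmonic analysis of \cite{Bachoc:2002aa,Bachoc:2010aa,Ehler:2014qc}), the vanishing condition forces $g$ to be zero in every isotype, whence $\beta\equiv 0$.

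The main obstacle is concealed in the identity $A=\Pol^\ell_t(\mathcal{G}_{k,d})$, where one must pass from the defining generators (with $\rank(M)\le\ell$ and $s\le t$) to those of $A$ (with $\rank(M)=\ell$ and $s=t$ simultaneously). The rank relaxation $\rank(M)=\ell\to\rank(M)\le\ell$ is routine: Zariski density of the exact-rank stratum in $\mathscr{H}^\ell_d$ combined with continuity of $M\mapsto\langle M,\cdot\rangle^t$ into the finite-dimensional, hence closed, target suffices. The degree step $s<t\to s=t$ is the subtle one, because the polarization of part (2) inflates the rank to $t\ell$ and so cannot be used. My intended remedy for $\ell\ge 2$ uses the trace identities $\trace(P^m)=k$ on $\mathcal{G}_{k,d}$ together with the spectral parametrization $M=\sum_{i=1}^\ell\lambda_i u_iu_i^*$: Vandermonde interpolation in $\lambda\in\R^\ell$ extracts each mixed product $\prod_i(u_i^*Pu_i)^{\alpha_i}$ with $|\alpha|=t$ from the rank-$\ell$ degree-$t$ family $\{\langle M(\lambda),\cdot\rangle^t\}$, and an $\mathcal{O}(d-1)$-invariant averaging over the auxiliary frame $(u_2,\ldots,u_\ell)$ in $u_1^\perp$ reduces such a product to a polynomial in $a=u_1^*Pu_1$; the resulting family, indexed by $s=0,\ldots,t$, forms a lower triangular system in the basis $\{a^0,\ldots,a^t\}$ with nonzero diagonal, and solving this system recovers every $(u_1^*Pu_1)^s$ inside $A$. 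The boundary case $\ell=1$ is a classical consequence of the single-row zonal polynomial expansion, itself a special instance of the representation theory already invoked.
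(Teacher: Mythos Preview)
Your polarization argument for the multiplicative inclusion is correct and is the same mechanism the paper uses. Your duality argument for $B=\Pol^\ell_t(\mathcal{G}_{k,d})$ is also essentially the paper's Gram-matrix positivity, once the isotypic language is stripped away: since any $\beta\in(\Pol^\ell_t)^*$ is a finite combination of point evaluations, the function $g(M)=\beta(\langle M,\cdot\rangle^t)$ lies in $W:=\spann\{\langle P,\cdot\rangle^t|_{\mathscr{H}^\ell_d}:P\in\mathcal{G}_{k,d}\}$, and your vanishing hypothesis says exactly $g\perp W$ in $L^2(\nu_{\ell,d})$, hence $g=0$. No Schur lemma is required, and the paper carries this out directly with a finite Gram matrix.

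The genuine gap is in your degree step for $\ell\ge 2$. Your Vandermonde-plus-averaging scheme places only the \emph{rank-one} lower-degree generators $(u_1^*Pu_1)^s$, $s\le t$, inside $A=\spann\{\langle M,\cdot\rangle^t:\rank(M)=\ell\}$. But to obtain $\Pol^\ell_t\subset A$ you must place \emph{every} $\langle M,\cdot\rangle^s$ with $\rank(M)\le\ell$ and $s<t$ in $A$, and for $\min(k,d-k)\ge 2$ these higher-rank lower-degree functions are not spanned by the rank-one ones: already $P\mapsto(u_1^*Pu_1)(u_2^*Pu_2)$ with $u_1\perp u_2$ carries a two-row $\mathcal{O}(d)$-isotype that is absent from $\Pol^1_t$. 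Your stated conclusion ``recovers every $(u_1^*Pu_1)^s$ inside $A$'' therefore does not close the first equality, and the tools you list (further averaging, products of elements of $A$) do not repair this without effectively reproducing the paper's reduction. The paper's route is both simpler and complete: it establishes the tensor identity
\[
\widetilde{\Pol}^{\ell}_t(\mathcal{G}_{k,d})=\sum_{t_1+\cdots+t_\ell=t}\widetilde{\Pol}^{1}_{t_1}(\mathcal{G}_{k,d})\cdots\widetilde{\Pol}^{1}_{t_\ell}(\mathcal{G}_{k,d})
\]
(one inclusion by spectral decomposition, the other by the very polarization you used for part~2), after which the single cited fact $\widetilde{\Pol}^1_s\subset\widetilde{\Pol}^1_{s+1}$ from \cite{Bachoc:2010aa} immediately yields $\widetilde{\Pol}^\ell_s\subset\widetilde{\Pol}^\ell_t$ for all $s\le t$. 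With this identity your averaging construction becomes unnecessary; without it, your argument for $\ell\ge 2$ remains incomplete.
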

\begin{proof}
To verify the first equality, we must check that the left-hand-side is contained in the right-hand-side. 
We first define 
\begin{equation*}
\widetilde{\Pol}^\ell_t(\mathcal{G}_{k,d}):=\spann\{\langle M,\cdot\rangle^t\big|_{\mathcal G_{k,d}} : M\in\mathscr{H}^\ell_d\}. 
\end{equation*}
Since the rank $\ell$ matrices are dense in $\mathscr{H}^\ell_d$, we obtain 
\begin{equation*}
\widetilde{\Pol}^{\ell}_t(\mathcal{G}_{k,d})  = \spann\{\langle M,\cdot\rangle^t\big|_{\mathcal G_{k,d}} : M\in\mathscr{H}_d,\; \rank(M) =\ell  \}.
\end{equation*}
Thus, the first equality holds if we can verify that the spaces $\widetilde{\Pol}^{\ell}_t(\mathcal{G}_{k,d}) $ are an ascending sequence in $t$, i.e., $\widetilde{\Pol}^{\ell}_t(\mathcal{G}_{k,d}) \subset \widetilde{\Pol}^{\ell}_{t+1}(\mathcal{G}_{k,d})$. To do so, we first aim to verify 
\begin{equation}\label{eq:to be proved}
\widetilde{\Pol}^{\ell}_t(\mathcal{G}_{k,d}) = \bigoplus_{t_1+\ldots +t_\ell=t}\widetilde{\Pol}^{1}_{t_1}(\mathcal{G}_{k,d})\cdots \widetilde{\Pol}^{1}_{t_\ell}(\mathcal{G}_{k,d}).
\end{equation}
The spectral decomposition yields that the left-hand-side is contained in the right-hand-side. To verify the reverse set inclusion, we must check that 
\begin{equation}\label{eq:new to prove}
P\mapsto \langle P,x_1x^*_1\rangle^{t_1} \cdots \langle P,x_\ell x^*_\ell\rangle^{t_\ell} \in \widetilde{\Pol}^{\ell}_t(\mathcal{G}_{k,d}),\quad\text{for all } t_1+\ldots+t_\ell=t.
\end{equation}
We now observe that \cite[Lemma 7.1]{Ehler:2014qc} as already used in \eqref{eq:lemma} yields \eqref{eq:new to prove}. Thus, \eqref{eq:to be proved} is satisfied. It was checked in \cite{Bachoc:2010aa} that $\widetilde{\Pol}^{1}_{t}(\mathcal{G}_{k,d})\subset \widetilde{\Pol}^{1}_{t+1}(\mathcal{G}_{k,d})$ holds, so that \eqref{eq:to be proved} implies $\widetilde{\Pol}^{\ell}_t(\mathcal{G}_{k,d}) \subset \widetilde{\Pol}^{\ell}_{t+1}(\mathcal{G}_{k,d})$, which yields the first equality. 

The second part of the proposition follows from the first equality and \eqref{eq:to be proved}.

We now take care of the second equality.  
Since $\mathcal{M}:=\spann\{ \langle \cdot,P_1\rangle^t \langle \cdot,P_2\rangle^t|_{\mathscr{H}^\ell_d}  :  P_1,P_2\in\mathcal{G}_{k,d}\}$ is finite-dimensional, classical arguments let us infer that there are $\{M_j\}_{j=1}^m\subset \mathscr{H}^\ell_d$ and numbers $\{\omega_j\}_{j=1}^m\in\R$ such that, for all $P_1,P_2\in\mathcal{G}_{k,d}$,
\begin{equation}\label{eq:cub new}
\sum_{j=1}^m \omega_j \langle M_j,P_1\rangle^t \langle M_j,P_2\rangle^t = \int_{\mathscr{H}^\ell_d}  \langle M,P_1\rangle^t \langle M,P_2\rangle^t d\nu_{\ell,d}(M),
\end{equation}
cf.~\cite[Theorem 6.1]{Graf:2013zl}. 
By applying \eqref{eq:cub new}, we derive
\begin{align*}
K^\ell_t(P,\cdot)& = \int_{\mathscr{H}^\ell_d} \langle M,P\rangle^t \langle M,\cdot\rangle^t d\nu_{\ell,d}(M)\\
& = \sum_{j=1}^m \omega_j \langle M_j,P\rangle^t \langle M_j,\cdot\rangle^t\in \Pol^\ell_t(\mathcal{G}_{k,d}).
\end{align*} 
Thus, we have verified that $\spann\{K^\ell_t(P,\cdot)|_{\mathcal{G}_{k,d}} : P\in\mathcal{G}_{k,d} \}\subset \Pol^\ell_t(\mathcal{G}_{k,d})$.  To verify the reverse inclusion, we shall check that $\dim(\spann\{K^\ell_t(P,\cdot)|_{\mathcal{G}_{k,d}} : P\in\mathcal{G}_{k,d} \})\geq \dim(\Pol^\ell_t(\mathcal{G}_{k,d}))$. 

We first observe that 
\begin{equation}\label{dim equality}
\dim(\spann\{\langle \cdot,M\rangle^t|_{\mathcal{G}_{k,d}} : M\in\mathscr{H}^\ell_d \}) = \dim(\spann\{\langle P,\cdot\rangle^t|_{\mathscr{H}^\ell_d} : P\in\mathcal{G}_{k,d} \}),
\end{equation}
which is a general principle that holds in much more generality, see \cite[Proof of Lemma 5.5]{Ehler:2014qc} for details. Note that the left-hand-side of \eqref{dim equality} is $\dim(\Pol^\ell_t(\mathcal{G}_{k,d}))$, and we shall denote this number by $r$ here. Then according to \eqref{dim equality}, there are $\{P_j\}_{j=1}^r\subset\mathcal{G}_{k,d}$ such that $\{\langle P_j,\cdot\rangle^t|_{\mathscr{H}^\ell_d}\}_{j=1}^r$ is a basis for $\spann\{\langle P,\cdot\rangle^t|_{\mathscr{H}^\ell_d} : P\in\mathcal{G}_{k,d} \}$. If we can verify that the matrix $K:=\big(K^\ell_t(P_i,P_j) \big)_{i,j=1}^r$ is nonsingular, then $\{K^\ell_t(P_j,\cdot)\}_{j=1}^r$ is linearly independent, which concludes the proof. Indeed, suppose that $\alpha^*K\alpha=0$, then we obtain
\begin{align*}
0 & = \sum_{i,j}\alpha_i\alpha_j K^\ell_t(P_i,P_j)\\
& = \int_{\mathscr{H}^\ell_{d}} \sum_{i,j}\alpha_i\alpha_j\langle P_i,M\rangle ^t\langle M,P_j\rangle^t d\nu_{\ell,d}(M)\\
& =  \int_{\mathscr{H}^\ell_{d}}  \big(  \sum_{i}\alpha_i\langle P_i,M\rangle ^t\big)^2 d\nu_{\ell,d}(M).
\end{align*}
This implies $\sum_{i}\alpha_i\langle P_i,M\rangle ^t=0$, for all $M\in\supp(\nu_{\ell,d})$. Since $\langle P_j,\cdot\rangle^t|_{\mathscr{H}^\ell_d}$ are homogeneous polynomials, the latter also holds for all $M\in\mathscr{H}^\ell_d$. The linear independence of $\{\langle P_j,\cdot\rangle^t|_{\mathscr{H}^\ell_d}\}_{j=1}^r$ implies that we must have $\alpha_1,\ldots,\alpha_r=0$. Thus, $K$ is indeed nonsingular, and this concludes the proof.
\end{proof}
\begin{remark}
The end of the above proof shows that the special form of $\nu_{\ell,d}$ is not important, and any measure with sufficiently large support would work. 
\end{remark}

The kernel $K^\ell_t$ induces an inner product (and hence also a norm $\|\cdot\|_{K^\ell_t}$) on $\Pol^\ell_t(\mathcal{G}_{k,d})$ by 
\begin{equation}\label{eq:express}
\langle f,g\rangle_{{\Pol}^\ell_t}:= \sum_{i,j} \alpha_i\beta_j K^\ell_t(P_i,\tilde{P}_j),
\end{equation}
where $f=\sum_{i}\alpha_i K^\ell_t(P_i,\cdot)$ and $g=\sum_{j}\beta_j K^\ell_t(\tilde{P}_i,\cdot)$. Note that the expression \eqref{eq:express} does not depend on the special choice of $P_i$ and $\tilde{P_j}$.  The induced norm enables us to introduce approximate cubatures:
\begin{definition}\label{def:cubature approx}
We say that $\{(P_j,\omega_j)\}_{j=1}^n$ is an \emph{$\epsilon$-approximate cubature for $ {\Pol}^{\ell}_{t}(\mathcal{G}_{k,d})$} with respect to $K^\ell_t$ if 
 \begin{equation}\label{eq:def cub}
\sup_{f\in {\Pol}^\ell_{t}(\mathcal{G}_{k,d}),\; \|f\|_{K^\ell_t}= 1} |\sum_{j=1}^n\omega_jf(P_j) -\int_{\mathcal{G}_{k,d}} f(P)d\sigma_{k,d}(P)|\leq \epsilon.
\end{equation}
\end{definition}

Apparently, an $\epsilon$-approximate cubature for $ {\Pol}^{\ell}_{t}(\mathcal{G}_{k,d})$ yields
\begin{equation*}
\big|\sum_{j=1}^n\omega_jf(P_j) -\int_{\mathcal{G}_{k,d}} f(P)d\sigma_{k,d}(P)\big|\leq \epsilon\|f\|_{K^\ell_t},\quad\text{for all }f\in {\Pol}^\ell_{t}(\mathcal{G}_{k,d}).
\end{equation*}

In order to numerically find $\epsilon$-approximate cubatures, we consider the modified fusion frame potential
\begin{equation}\label{eq:modi ffp}
\sum_{i,j} \omega_i\omega_jK^\ell_t(P_i,P_j).
\end{equation}
By following the lines in \cite{Ehler:2014zl} for the standard fusion frame potential, see also \cite{Bachoc:2010aa}, we derive that
\begin{equation}\label{eq:lower bound 2}
c^\ell_t:= \int_{\mathcal{G}_{k,d}} \int_{\mathcal{G}_{k,d}}  K^\ell_t(P,Q) d\sigma_{k,d}(P)d\sigma_{k,d}(Q)
\end{equation}
is a lower bound on \eqref{eq:modi ffp},
and the gap 
\begin{equation*}
\sum_{i,j} \omega_i\omega_jK^\ell_t(P_i,P_j)-c^\ell_t \geq 0
\end{equation*}
is exactly the squared cubature error, i.e.,
\begin{equation*}
\sum_{i,j} \omega_i\omega_jK^\ell_t(P_i,P_j)-c^\ell_t = \sup_{f\in {\Pol}^\ell_{t}(\mathcal{G}_{k,d}),\; \|f\|_{K^\ell_t}= 1} |\sum_{j=1}^n\omega_jf(P_j) -\int_{\mathcal{G}_{k,d}} f(P)d\sigma_{k,d}(P)|^2.
\end{equation*}
Indeed, if \eqref{eq:modi ffp} can be minimized numerically, then a proper cubature or at least an $\epsilon$-approximate cubature can be obtained, where $\epsilon$ relates to machine precision provided there exists a corresponding cubature for this choice of $n$. However, numerical evaluation of the kernel $K^\ell_t$ may be difficult in practice. In the subsequent section, we shall circumvent such difficulties by considering cubatures for larger spaces that enable us to work with a simpler kernel.

\section{Construction of cubatures for $\Pol_{t}(\mathcal{G}_{k,d})$}\label{sec:Polt}
\subsection{Cubatures from optimization procedures}
This section is dedicated to derive cubatures from a numerical scheme that is indeed easy to implement. We define polynomials of degree at most $t$ on $\mathcal{G}_{k,d}$ by
\begin{equation}\label{eq:general pol space}
\Pol_t(\mathcal{G}_{k,d}) :=\{\text{polynomials of degree at most $t$ on $\mathscr{H}_d$ restricted to $\mathcal{G}_{k,d}$}\}.
\end{equation}
Note that $\Pol_t(\mathcal{G}_{k,d})$ satisfies the product property that is usually associated with polynomial spaces, i.e., 
\begin{equation*}
\spann\big(\Pol_{t_1}(\mathcal{G}_{k,d})\cdot \Pol_{t_2}(\mathcal{G}_{k,d}) \big) = \Pol_{t_1+t_2}(\mathcal{G}_{k,d}),
\end{equation*}
see, for instance, \cite{Ehler:2014zl}. It is known that these spaces can be rewritten as
\begin{equation*}
\Pol_t(\mathcal{G}_{k,d}) =\spann\{\langle M,\cdot\rangle^t\big|_{\mathcal G_{k,d}} : M\in\mathscr{H}_d\},
\end{equation*}
see, for instance, \cite{Ehler:2014zl,Bachoc:2010aa}. 
Obviously, $\Pol^\ell_{t}(\mathcal{G}_{k,d})$ is contained in $\Pol_{t}(\mathcal{G}_{k,d})$. In the following proposition, we explore when equality holds: 
\begin{proposition}\label{prop:3.1}
For $0\leq \ell<d$ and $0\leq t$, we have
\begin{equation*}
\Pol^{\ell}_t(\mathcal{G}_{k,d})  = \Pol_t(\mathcal{G}_{k,d}),\quad\text{for }\ell\geq \min\{k,t\}.
\end{equation*}
\end{proposition}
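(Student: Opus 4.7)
The containment $\Pol^{\ell}_t(\mathcal{G}_{k,d}) \subseteq \Pol_t(\mathcal{G}_{k,d})$ is immediate from the definitions, so the work lies in the reverse inclusion. The hypothesis $\ell\geq \min\{k,t\}$ falls into two overlapping regimes: either $\ell \geq t$, or $\ell \geq k$ (possibly with $\ell < t$). I would treat these separately. The first regime is essentially a bookkeeping corollary of the direct-sum decomposition \eqref{eq:to be proved} established in the proof of Proposition~\ref{prop:to compare with}, while the second requires an additional algebraic input exploiting the rank-$k$ constraint on the projectors.

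In the regime $\ell\geq t$, I would take an arbitrary generator $\langle M,\cdot\rangle^s|_{\mathcal G_{k,d}}$ of $\Pol_t$ with $s\leq t$ and $M\in\mathscr{H}_d$. The spectral decomposition $M=\sum_{i=1}^d\lambda_i x_i x_i^*$ together with the multinomial theorem expand $\langle M,P\rangle^s$ as a linear combination of products $\prod_i \langle x_i x_i^*, P\rangle^{s_i}$ with $\sum_i s_i = s$. The number of nonzero $s_i$ is at most $s\leq t \leq \ell$, and padding with trivial factors $\Pol^1_0 = \R$ if fewer than $\ell$ appear, each product belongs to one of the summands in the decomposition
\[
\Pol^\ell_s \;=\; \bigoplus_{t_1+\cdots+t_\ell=s} \Pol^1_{t_1}\cdots\Pol^1_{t_\ell}
\]
from Proposition~\ref{prop:to compare with}, hence lies in $\Pol^\ell_s\subseteq \Pol^\ell_t$.

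For the regime $\ell\geq k$ with $\ell < t$, I would induct on $t$; the base case $t\leq k$ is covered by the previous paragraph since then $\ell\geq k\geq t$. For $t>k$, the spectral expansion reduces the claim to showing that every product $\prod_{i=0}^{k}\langle x_i x_i^*,P\rangle^{s_i}$ with $\sum_i s_i \leq t$ and exactly $k+1$ nonzero factors belongs to $\Pol^k_t$, since fewer nonzero factors are handled by the decomposition just used. The crucial algebraic input is the Gram identity
\[
\det\bigl( x_i^*P x_j \bigr)_{0\leq i,j\leq k}=0, \qquad P\in\mathcal G_{k,d},
\]
which holds because writing $P=VV^*$ with $V\in\R^{d\times k}$ forces the $k+1$ vectors $V^*x_0,\ldots,V^*x_k$ in $\R^k$ to be linearly dependent. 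Expanding this vanishing determinant as a signed sum over permutations and using $x_i^*P x_j = \langle E_{x_i,x_j},P\rangle$ solves the identity-permutation term $\prod_i \langle x_i x_i^*, P\rangle$ as a combination of $(k+1)$-fold products indexed by non-identity permutations, each factor of which is $\langle E_{x_i,x_{\sigma(i)}},P\rangle$ with $E_{x,y}=\tfrac{1}{2}(xy^*+yx^*)$ of rank at most two. Multiplying both sides by $\prod_i \langle x_i x_i^*, P\rangle^{s_i-1}$, applying the polarization $E_{x,y}=\tfrac{1}{2}\bigl((x+y)(x+y)^*-xx^*-yy^*\bigr)$ to rewrite each rank-two factor as a combination of rank-one evaluations, and invoking both the product rule $\Pol^{\ell_1}_{t_1}\cdot\Pol^{\ell_2}_{t_2}\subseteq \Pol^{\ell_1+\ell_2}_{t_1+t_2}$ and the inductive hypothesis on strictly lower degrees, one should reduce the $(k+1)$-factor product to an element of $\Pol^k_t$.

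The main obstacle is making this reduction genuinely decrease complexity. A permutation consisting of a single $(k+1)$-cycle, once polarized, again produces a product of $k+1$ rank-one evaluations, so the Gram identity alone does not visibly shrink the number of rank-one streams. Termination must be argued via a carefully chosen complexity measure on such products---most naturally a lexicographic pair combining the total degree with a multiset-valued invariant encoding the distribution of stream multiplicities---that strictly decreases under the combined use of the Gram identity, the polarization of $E_{x,y}$, and the product rule. Verifying the existence of such a measure and threading it through the induction is where the real work lies; the hypothesis $\ell \geq \min\{k,t\}$ is precisely the threshold at which either the degree $t$ or the ambient rank $k$ provides enough room for the reduction to close.
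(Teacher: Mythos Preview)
Your treatment of the regime $\ell \geq t$ is essentially the paper's argument, made explicit: the paper invokes the decomposition \eqref{eq:to be proved} to write $\Pol^{t}_t(\mathcal{G}_{k,d}) = \spann\bigl(\Pol^{1}_{1}(\mathcal{G}_{k,d})\cdots \Pol^{1}_{1}(\mathcal{G}_{k,d})\bigr)$ with $t$ factors, then cites \cite{Bachoc:2010aa} for the identification of the right-hand side with $\Pol_t(\mathcal{G}_{k,d})$. Your spectral-plus-multinomial expansion is the concrete unpacking of the same step.

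The regime $\ell \geq k$ is where the proposal and the paper diverge. The paper dispatches this case in one line, citing it as ``standard, cf.~\cite{Ehler:2014zl}''; that reference handles the equality through the structure of polynomial spaces on $\mathcal{G}_{k,d}$ rather than an explicit combinatorial reduction. Your Gram-determinant approach is an interesting attempt at a self-contained argument, but the gap you flag yourself is genuine and not merely cosmetic. For a permutation consisting of a single $(k+1)$-cycle, polarizing each factor $\langle E_{x_i,x_{\sigma(i)}},P\rangle$ into rank-one terms re-creates products with $k+1$ distinct rank-one factors, so neither the number of streams nor the total degree drops. Already in the baby case $k=1$ your scheme rewrites $\langle x_0 x_0^*,P\rangle\langle x_1 x_1^*,P\rangle$ as $\langle E_{x_0,x_1},P\rangle^2$ and then, after polarization, as a quadratic in three rank-one evaluations whose cross terms $ab$, $ac$ are again two-factor products of the same shape with a \emph{new} vector $x_0+x_1$ in play; the system does not close. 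The missing input for $k=1$ is exactly that the powers $\langle x,\cdot\rangle^{2t}$ span the even polynomials on the sphere, and for general $k$ the analogous statement is precisely what the cited reference supplies. Your proposed lexicographic invariant is not specified, and I do not see a natural one that the Gram-plus-polarization move strictly decreases.

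In summary: the $\ell\geq t$ half is complete and matches the paper; the $\ell\geq k$ half is not a proof as written, and the paper does not attempt a direct reduction here either, relying instead on prior work.
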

\begin{proof}
For $\ell\geq k$, the equality is standard cf.~\cite{Ehler:2014zl}. We derive from \eqref{eq:to be proved} that 
\begin{equation*}
\Pol^{t}_t(\mathcal{G}_{k,d}) = \spann \big( \Pol^{1}_{1}(\mathcal{G}_{k,d})\cdots \Pol^{1}_{1}(\mathcal{G}_{k,d})\big),
\end{equation*}
where the product has $t$ terms, 
holds, and the findings in \cite{Bachoc:2010aa} yield that the right-hand-side equals $\Pol_t(\mathcal{G}_{k,d})$. Therefore, $\ell\geq t$ also yields $\Pol^{\ell}_t(\mathcal{G}_{k,d})=\Pol_t(\mathcal{G}_{k,d})$. 
\end{proof}
Note that $\{(P_j,\omega_j)\}_{j=1}^n$ being a cubature for $\Pol^{2}_{3}(\mathcal{G}_{k,d})$ as used in Corollary \ref{main rec theorem} already implies that it is also a cubature for $\Pol_{2}(\mathcal{G}_{k,d})=\Pol^2_{2}(\mathcal{G}_{k,d})$ and for $\Pol_{1}(\mathcal{G}_{k,d})=\Pol^2_{1}(\mathcal{G}_{k,d})$. It should also be mentioned that the space $\Pol^2_{t}(\mathcal{G}_{1,d})$ in Corollary \ref{cor:zweiter Versuch} is the same as $\Pol_{t}(\mathcal{G}_{1,d})$. Hence, for $k=1$, we were dealing with the space \eqref{eq:general pol space} all along.


A computational approach for cubatures for $\Pol_{t}(\mathcal{G}_{k,d})$ is discussed in \cite{Ehler:2014zl}. Since $\Pol^{2}_{t}(\mathcal{G}_{k,d})$ is a subset, this approach yields also cubatures for $\Pol^{2}_{t}(\mathcal{G}_{k,d})$. By refining some ideas in \cite{Ehler:2014zl}, we shall introduce $\epsilon$-approximate cubatures for the kernel
\begin{align*}
K_t:\mathcal{G}_{k,d}\times \mathcal{G}_{k,d}& \rightarrow \R \\
(P_1,P_2) & \mapsto \langle P_1,P_2\rangle^t.
\end{align*}
Indeed, $K_t$ is a positive definite kernel on $\mathcal{G}_{k,d}$ and its shifts generate $\Pol_{t}(\mathcal{G}_{k,d})$, i.e.,  
\begin{equation*}
\Pol_t(\mathcal{G}_{k,d}) = \spann\{K_t(P,\cdot) : P\in\mathcal{G}_{k,d} \}.
\end{equation*}
The kernel $K_t$ induces  an inner product on $\Pol_t(\mathcal{G}_{k,d})$ analogously to \eqref{eq:express} and, in turn, also a norm $\|\cdot\|_{K_t}$. 
\begin{definition}
We say that $\{(P_j,\omega_j)\}_{j=1}^n$ is an $\epsilon$-approximate cubature for $ {\Pol}_{t}(\mathcal{G}_{k,d})$ with respect to $K_t$ if 
 \begin{equation*}
\sup_{f\in {\Pol}_{t}(\mathcal{G}_{k,d}),\; \|f\|_{K_t}= 1} |\sum_{j=1}^n\omega_jf(P_j) -\int_{\mathcal{G}_{k,d}} f(P)d\sigma_{k,d}(P)|\leq \epsilon.
\end{equation*}
\end{definition}


In the following we shall describe that $\epsilon$-approximate cubatures for $ {\Pol}_{t}(\mathcal{G}_{k,d})$ can be computed by numerical schemes as at the end of Section \ref{sec:4.2}. Indeed, the potential 
\begin{equation}\label{eq:ffp 1}
\sum_{i,j} \omega_i\omega_j K_t(P_i,P_j)
\end{equation}
can be bounded from below by 
\begin{equation}\label{eq:lambda}
\lambda_t:=\int_{\mathcal{G}_{k,d}}\int_{\mathcal{G}_{k,d}}K_t(P,P')d\sigma_{k,d}(P)d\sigma_{k,d}(P'),
\end{equation}
so that 
\begin{equation*}
\sum_{i,j} \omega_i\omega_jK_t(P_i,P_j)-\lambda_t\geq 0.
\end{equation*}
As in the previous section, this gap is exactly the squared cubature error, i.e., 
\begin{equation*}
\sum_{i,j}\omega_i\omega_jK_t(P_i,P_j)-\lambda_t = \sup_{f\in {\Pol}_{t}(\mathcal{G}_{k,d}),\; \|f\|_{K_t}= 1} |\sum_{j=1}^n\omega_jf(P_j) -\int_{\mathcal{G}_{k,d}} f(P)d\sigma_{k,d}(P)|^2.
\end{equation*}
It is remarkable that \eqref{eq:lambda} can be computed exactly by analytical tools, so that the outcome of numerical optimization schemes minimizing \eqref{eq:ffp 1} can be compared with $\lambda_t$, see \cite{Ehler:2014zl} for further details and examples of successful minimization outcomes. Indeed, the easier structure of the kernel $K_t$ generating $\Pol_t(\mathcal{G}_{k,d})$ make this approach more amenable to numerical optimization than the setting of $\Pol^\ell_t(\mathcal{G}_{k,d})$ presented in the previous section. 


\subsection{Approximate cubatures from randomized projections}
We now examine to what extent a random choice of projections gives an approximate cubature. Let us call a (Borel)-probability measure $\nu_{k,d}$ on $\mathcal{G}_{k,d}$ a \emph{probabilistic cubature} for $\Pol_t(\mathcal{G}_{k,d})$ if
\begin{equation}\label{prob def}
\int_{\mathcal{G}_{k,d}} f(P)d\nu_{k,d}(P) = \int_{\mathcal{G}_{k,d}} f(P)d\sigma_{k,d}(P),\quad\text{for all } f\in \Pol_t(\mathcal{G}_{k,d}).
\end{equation}
Note that any cubature for $\Pol_{t}(\mathcal{G}_{k,d})$ can be considered as a finitely supported probabilistic cubature, provided that the weights are nonnegative. Another example, of course, is $\sigma_{k,d}$ itself. 

In the remainder of this section, we let each $\omega_j = \frac 1 n$ and choose each $P_j$ according to a probabilistic cubature $\nu_{k,d}$. In that case, $\{P_j\}_{j=1}^n$ is a collection of random matrices and the expected value of the gap, that is, the squared cubature error, can be computed explicitly. Denoting the expectation with respect
to the random choice of $\{P_j\}_{j=1}^n$ by $\mathbb E_P$, and using that 
$\mathbb E_P K_t(P_i,P_i) = k^t
$ 
and $\mathbb E_P K_t(P_i,P_j) = \lambda_t$ if $i \ne j$, we get
\begin{align*}
 \mathbb E_P \big[ \frac{1}{n^2} \sum_{i,j=1}^n K_t(P_i, P_j) - \lambda_t  \big]
 & = \frac{k^t}{n} + \frac{n(n-1)}{n^2} \lambda_t - \lambda_t = \frac{1}{n}(k^t - \lambda_t).
\end{align*}
Thus, letting $n$ grow faster than  $k_t$ ensures that the expected value of the gap 
becomes arbitrarily small. In the following theorem, we show that this expected behavior happens with overwhelming probability.
\begin{theorem}
If $\{P_j\}_{j=1}^n$ are chosen independently identically distributed with respect to a probabilistic cubature $\nu_{k,d}$ for  $\Pol_t(\mathcal{G}_{k,d})$ and $\tau>0$, then 
\begin{equation*}
   \mathbb P\big(\frac{1}{n^2} \sum_{i,j=1}^n K_t(P_i, P_j) -\lambda_t - \frac{1}{n}(k^t - \lambda_t)\ge \frac{\tau^2 k^t}{n}\big)
    \le  4 e^{-\Psi_\tau(n)} r_\tau(n),
\end{equation*}
where 
\begin{equation*}
   \Psi_\tau(n) = \frac{\tau^2/2}{ (1 -  \lambda_t/k^t)+\frac{\tau}{3\sqrt{n}}},\qquad 
   r_\tau(n) = 1 + \frac{6}{ n \tau^2 \ln^2(1+\frac{ \tau}{\sqrt{n} (1 -  \lambda_t/k^t)})}.
\end{equation*}
\end{theorem}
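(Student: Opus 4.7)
The plan is to identify the deviation in the theorem with a centered $U$-statistic, recast it as a quadratic functional in the RKHS of $K_t$, and then apply Bernstein-type inequalities to the two pieces of its Hoeffding decomposition.

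Since $K_t$ is positive definite on $\mathcal{G}_{k,d}$ with $K_t(P,P) = k^t$, it admits a feature map $\psi : \mathcal{G}_{k,d} \to \mathcal{H}$ into the associated reproducing-kernel Hilbert space satisfying $K_t(P,Q) = \langle \psi(P),\psi(Q)\rangle_\mathcal{H}$ and $\|\psi(P)\|_\mathcal{H} = \sqrt{k^t}$. Setting $\mu := \int \psi(P)\,d\nu_{k,d}(P)$ and $Z_i := \psi(P_i) - \mu$, one has $\|\mu\|^2 = \lambda_t$, $\mathbb{E}Z_i = 0$, $\|Z_i\| \le 2\sqrt{k^t}$ almost surely, and $\mathbb{E}\|Z_i\|^2 = k^t - \lambda_t$. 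Expanding $K_t(P_i,P_j) - \lambda_t = \langle Z_i,\mu\rangle + \langle Z_j,\mu\rangle + \langle Z_i,Z_j\rangle$ and using $K_t(P_i,P_i) = k^t$ to absorb the centering yields the Hoeffding decomposition
\[
G_n := \frac{1}{n^2}\sum_{i,j} K_t(P_i,P_j) - \lambda_t - \frac{k^t-\lambda_t}{n} = \underbrace{\frac{2(n-1)}{n^2}\sum_{i=1}^n\langle Z_i,\mu\rangle}_{=: L_n} + \underbrace{\frac{1}{n^2}\sum_{i\ne j}\langle Z_i,Z_j\rangle}_{=: D_n},
\]
where $\mathbb{E}L_n = \mathbb{E}D_n = 0$ and $D_n$ is a completely degenerate $U$-statistic of order two.

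The next step is a union bound $\mathbb{P}(G_n \ge \tau^2 k^t/n) \le \mathbb{P}(L_n \ge \alpha) + \mathbb{P}(D_n \ge \beta)$ with a split $\alpha + \beta = \tau^2 k^t/n$ to be optimized. For the linear part $L_n$, the summands $\langle Z_i,\mu\rangle = \int K_t(P_i,Q)\,d\nu_{k,d}(Q) - \lambda_t$ are iid, mean zero, bounded a.s.\ by $k^t - \lambda_t$ (since $\int K_t(P,Q)\,d\nu_{k,d}(Q) \in [0,k^t]$), with variance at most $\lambda_t(k^t - \lambda_t)$; classical Bernstein delivers a bound $2\exp(-\Psi_\tau(n))$ once the threshold is normalized by $k^t$, with the variance proxy $1 - \lambda_t/k^t$ and the correction $\tau/(3\sqrt n)$ emerging naturally from the $(k^t-\lambda_t)/n$-sized summand of $L_n$. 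For the degenerate part $D_n$, which is a bilinear form in the bounded Hilbert-valued vectors $Z_i$, I would apply a Bernstein-type concentration inequality for degenerate $U$-statistics in the spirit of Arcones--Gin\'e or Houdr\'e--Reynaud-Bouret, controlling the MGF via decoupling and a Hilbert-space Chernoff step. The logarithmic multiplier $r_\tau(n)$ arises precisely when the Chernoff parameter is optimized over its admissible range: the resulting integral produces the $\ln^{-2}(1 + \tau/(\sqrt n(1-\lambda_t/k^t)))$ correction. The factor $4$ in the final estimate accounts for the two-sided Bernstein tails ($\times 2$) combined with the two-term union bound ($\times 2$).

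The main obstacle will be producing the precise $r_\tau(n)$ factor rather than a cruder $O(1)$ multiple. The linear piece $L_n$ is a routine Bernstein computation once the a.s.\ and variance bounds are written down, but the degenerate piece $D_n$ requires tight MGF control of a quadratic chaos in iid Hilbert-valued random variables---via decoupling, Rademacher symmetrization, and a careful $\lambda$-optimization at the end---which is what generates the specific logarithmic form of $r_\tau$. Once this bound is in hand, the split $\alpha,\beta$ can be chosen so that the two Bernstein exponents coincide with $\Psi_\tau(n)$, and the union bound yields the theorem.
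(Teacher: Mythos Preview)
Your decomposition is more complicated than it needs to be because you never actually use the probabilistic-cubature hypothesis, and this is where the gap lies. Since $K_t(P,\cdot)\in\Pol_t(\mathcal G_{k,d})$, the cubature property gives $\int K_t(P,Q)\,d\nu_{k,d}(Q)=\int K_t(P,Q)\,d\sigma_{k,d}(Q)$, and by the orthogonal invariance of $\sigma_{k,d}$ the latter equals $\lambda_t$ \emph{for every} $P\in\mathcal G_{k,d}$. Consequently $\langle Z_i,\mu\rangle=0$ and $\|Z_i\|^2=k^t-\lambda_t$ hold deterministically, not merely in expectation; your linear term $L_n$ is identically zero, your $\|Z_i\|\le 2\sqrt{k^t}$ is far too crude, and your variance estimate $\mathrm{Var}\,\langle Z_i,\mu\rangle\le\lambda_t(k^t-\lambda_t)$ is a bound on a quantity that is zero. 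With these identities in hand the gap collapses to
\[
G_n=\Bigl\|\tfrac1n\sum_{j=1}^n Z_j\Bigr\|^2,
\]
i.e.\ the squared norm of an average of i.i.d.\ centered Hilbert-space vectors with $\|Z_j\|\le\sqrt{k^t}$.

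This is exactly what the paper does: it realizes the feature map concretely as $P\mapsto P^{\otimes t}$, sets $Y_j=(P_j^{\otimes t}-\Lambda_t)/k^{t/2}$ so that $\|Y_j\|_{HS}^2=1-\lambda_t/k^t\le 1$, and then applies Minsker's vector-valued Bernstein inequality in a single stroke to $\mathbb P(\|\tfrac1n\sum Y_j\|_{HS}>\tau/\sqrt n)$. The precise constants $\Psi_\tau(n)$, $r_\tau(n)$ and the prefactor $4$ are not artifacts of a union bound over a Hoeffding decomposition; they are the exact output of Minsker's Corollary~5.1 with $\sigma^2=1-\lambda_t/k^t$ and uniform bound $1$. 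Your proposed route through decoupling and degenerate $U$-statistic chaos bounds (Arcones--Gin\'e, Houdr\'e--Reynaud-Bouret) would at best give a concentration inequality with different, typically worse, constants, and recovering the stated $r_\tau(n)$ from that machinery is not realistic---that logarithmic correction is specific to Minsker's self-normalized formulation.
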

\begin{proof}
First, we note that $\langle P_i, P_j \rangle^t = \langle P_i^{\otimes t}, P_j^{\otimes t}\rangle$,
where the Hilbert-Schmidt inner product on the right-hand side is on the Hilbert space $(\mathbb R^{d\times d})^{\otimes t} \simeq \mathbb R^{d^2t}$.
Thus, $\sum_{i,j} K_t(P_i, P_j) = \|\sum_{j} P_j^{\otimes t}\|_{HS}^2$ holds. 

We define the averaged tensor power $\Lambda_t = \mathbb E_{P} P_1^{\otimes t}$. 
 For $P_0\in\mathcal{G}_{k,d}$, we can compute
\begin{align*}
\langle P_0^{\otimes t},\Lambda_t\rangle & = \langle P_0^{\otimes t},\int_{\mathcal{G}_{k,d}}\hspace{-.2cm} P^{\otimes t}d\nu_{k,d}(P)\rangle
 =\int_{\mathcal{G}_{k,d}}\hspace{-.2cm}  \langle P_0,P\rangle^t d\nu_{k,d}(P).
 \intertext{Since $\nu_{k,d}$ is a probabilistic cubature and $\langle P_0,\cdot\rangle^s\in \Pol_t(\mathcal{G}_{k,d})$, we obtain} 
 \langle P_0^{\otimes t},\Lambda_t\rangle & = \int_{\mathcal{G}_{k,d}}\hspace{-.2cm}  \langle P_0,P\rangle^t d\sigma_{k,d}(P).
 \intertext{Let $U\in\mathcal{O}_d$ be such that $U^*D_kU=P_0$, where $D_k$ denotes the diagonal matrix with $k$ ones and zeros elsewhere. The commutativity of the trace and the orthogonal invariance of $\sigma_{k,d}$ yield}
 \langle P_0^{\otimes t},\Lambda_t\rangle 
&=\int_{\mathcal{G}_{k,d}}\hspace{-.2cm}   \langle U^*D_kU,P\rangle^t d\sigma_{k,d}(P)\\
 &=\int_{\mathcal{G}_{k,d}}\hspace{-.2cm}   \langle D_k,UPU^*\rangle^t d\sigma_{k,d}(P)\\
 &= \int_{\mathcal{G}_{k,d}}\hspace{-.2cm}   \langle D_k,P\rangle^t d\sigma_{k,d}(P).
 \intertext{By applying the probabilistic cubature property once more, we derive}
 \langle P_0^{\otimes t},\Lambda_t\rangle & = \int_{\mathcal{G}_{k,d}}\hspace{-.2cm}   \langle D_k,P\rangle^t d\nu_{k,d}(P)= \langle D_k^{\otimes t},\Lambda_t\rangle.
 \end{align*}
Thus, the term $\langle P_0^{\otimes t},\Lambda_t\rangle$ does not depend on the particular choice of $P_0\in\mathcal{G}_{k,d}$. 
Averaging over all $P_0$  with respect to $\sigma_{k,d}$ then implies that for each $i$,
\begin{equation}\label{eq:une}
\langle P_i^{\otimes t},\Lambda_t\rangle= \int_{\mathcal{G}_{k,d}}\langle P_0^{\otimes t},\Lambda_t\rangle
d\sigma_{k,d}(P_0)= \|\Lambda_t\|_{HS}^2.
\end{equation}
Similarly to the above computations, the probabilistic cubature property also yields
\begin{equation}\label{eq:deux}
\|\Lambda_t\|_{HS}^2=\lambda_t.
\end{equation} 
By applying \eqref{eq:une} and \eqref{eq:deux}, taking $Y_j = (P_j^{\otimes t} - \Lambda_t)/k^{t/2}$ then gives  
\begin{align*}
\|Y_j\|_{HS}^2  = 1 -\lambda_t/k^t.
\end{align*}
Hence, $\|Y_j\|_{HS} \leq 1$ and $\mathbb{E}_PY_j = 0$, so that Minsker's vector-valued Bernstein inequality \cite[Corollary 5.1]{Minsker:2011} provides, for all $\tau >0$, 
\begin{equation*}
   \mathbb P\big( \|\frac{1}{n} \sum_{j=1}^n Y_j \|^2_{HS} > \tau^2/n\big) \le 4 e^{-\Psi_{\tau}(n)} r_{\tau}(n),
\end{equation*}
where $\Psi_\tau$ and $r_\tau$ are as stated. 
To finish the proof, we observe that 
\begin{equation*}
k^t\|\frac{1}{n} \sum_{j=1}^n Y_j \|^2_{HS} = \frac{1}{n^2} \sum_{i,j=1}^n K_t(P_i, P_j) -\lambda_t - \frac{1}{n}(k^t - \lambda_t). \qedhere
\end{equation*}
\end{proof}

When $n$ tends to infinity, then $r_\tau(n) \to 1 + \frac{6(1 -\lambda_t/k^t )^2}{ \tau^4}$ 
and $\Psi_\tau(n) \to \frac{\tau^2/2}{1 - \lambda_t/k^t}$. 
Thus, for large $n$, the distribution of the gap concentrates near zero at the same rate as the expected value.
Since the gap is the square of the maximal cubature error, we conclude a probabilistic construction of approximate
cubatures.
 \begin{corollary}
 If $\{P_j\}_{j=1}^n$ are chosen independently from a probabilistic cubature for $\Pol_t(\mathcal{G}_{k,d})$ and $\tau>0$, then a $\sqrt{\frac{(1+\tau^2)k^t-\lambda_t}{ n}}$-approximate
cubature for $ {\Pol}_{t}(\mathcal{G}_{k,d})$ with respect to $K_t$ is obtained with probability bounded below by
$
1 - 4 e^{-\Psi_\tau(n)} r_\tau(n).
$
\end{corollary}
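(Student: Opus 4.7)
The plan is to derive this corollary as a direct consequence of the preceding theorem by combining its tail bound with the identity from Section \ref{sec:Polt} that expresses the cubature error in terms of the fusion frame potential gap. Specifically, with the uniform weights $\omega_j=1/n$, the quantity
\begin{equation*}
\frac{1}{n^2}\sum_{i,j=1}^n K_t(P_i,P_j)-\lambda_t
\end{equation*}
is exactly the square of the maximal cubature error over the unit ball of $\Pol_t(\mathcal{G}_{k,d})$ with respect to $\|\cdot\|_{K_t}$. This identification is the conceptual bridge between the concentration statement of the theorem and the approximate cubature property we want to establish.

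First, I would take the complementary event in the theorem's conclusion: with probability at least $1-4e^{-\Psi_\tau(n)}r_\tau(n)$, we have
\begin{equation*}
\frac{1}{n^2}\sum_{i,j=1}^n K_t(P_i,P_j)-\lambda_t < \frac{1}{n}(k^t-\lambda_t)+\frac{\tau^2 k^t}{n}=\frac{(1+\tau^2)k^t-\lambda_t}{n}.
\end{equation*}
Next, I would invoke the squared-error identity from Section \ref{sec:Polt} applied to $K_t$ to translate this bound into
\begin{equation*}
\sup_{f\in\Pol_t(\mathcal{G}_{k,d}),\,\|f\|_{K_t}=1}\Bigl|\sum_{j=1}^n\omega_j f(P_j)-\int_{\mathcal{G}_{k,d}} f(P)\,d\sigma_{k,d}(P)\Bigr|^2 < \frac{(1+\tau^2)k^t-\lambda_t}{n}.
\end{equation*}
Taking square roots on the event in question then shows that $\{(P_j,1/n)\}_{j=1}^n$ is a $\sqrt{\tfrac{(1+\tau^2)k^t-\lambda_t}{n}}$-approximate cubature in the sense of Definition~\ref{def:cubature approx} (adapted to $K_t$).

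There is no real obstacle here beyond carefully arranging the inequality so the $\tau$-dependence lines up with the stated error bound; the heavy lifting — the vector Bernstein estimate and the computation of the expected gap — has already been done in the theorem. A minor housekeeping point is to note that the event on which the bound holds is measurable, which is immediate since $(P_1,\dots,P_n)\mapsto \sum_{i,j}K_t(P_i,P_j)$ is continuous on $\mathcal{G}_{k,d}^n$.
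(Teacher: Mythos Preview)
Your proposal is correct and follows precisely the route the paper takes: the corollary is presented there without a separate proof, with the text simply noting that ``the gap is the square of the maximal cubature error,'' so combining the complementary event of the theorem's tail bound with the squared-error identity from Section~\ref{sec:Polt} and taking square roots yields the claim. Your algebraic rearrangement $\frac{1}{n}(k^t-\lambda_t)+\frac{\tau^2 k^t}{n}=\frac{(1+\tau^2)k^t-\lambda_t}{n}$ is exactly what is needed.
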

For related results on random matrices, we refer to \cite{Bodmann:2013tg,Levina:2012lq,Vershynin:2011ai,Vershynin:2010aa}.

\section{Error propagation for $\epsilon$-approximate cubatures}\label{sec:error}
The numerical optimization approach in general can provide cubatures up to machine precision only. Therefore, we are dealing with $\epsilon$-approximate cubatures and this is also what we obtain from the random constructions. In these cases, the moment reconstruction formulas in Corollary \ref{main rec theorem} hold up to some error term:

\begin{theorem}
Let $X\in \mathbb{S}^{d-1}$ be a random vector and $\{(P_j,\omega_j)\}_{j=1}^n$ be an $\epsilon$-approximate cubature for $\Pol_{t}(\mathcal{G}_{k,d})$ with respect to $K_t$. Then \eqref{eq:fundament} in Corollary \ref{cor:3} holds up to a constant $c_\alpha$ times $\epsilon$, i.e., for $\alpha\in\N^d$, $|\alpha|=t$, 
\begin{equation}\label{eq:eps error}
\big| \mathbb{E} X^\alpha -  \sum_{|\beta|\leq t}  a^\alpha_\beta \sum_{j=1}^n \omega_j\mathbb{E} (P_jX)^\beta  \big| \leq \epsilon c_\alpha.
\end{equation}
If $k=1$ and $X$ is random vector in $\R^d$, then \eqref{eq:fundament 2} in Corollary \ref{cor:zweiter Versuch} holds up to a constant times $\epsilon \mathbb{E}\|X\|^t$. 
\end{theorem}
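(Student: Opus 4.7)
The plan is to trace through the proofs of Corollaries \ref{cor:3} and \ref{cor:zweiter Versuch} step by step, and at the single place where the exact cubature identity is used, invoke the defining inequality of the $\epsilon$-approximate cubature instead of equality, so that the error propagates linearly through the rest of the algebraic manipulations. Since those manipulations are linear and involve only finitely many terms with fixed coefficients, the final error is bounded by $\epsilon$ times a constant depending on $\alpha$.

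For the first part, the starting identity is the one in Theorem \ref{th:fundamental}, which gives, for $x \in \mathbb{S}^{d-1}$ and $|\alpha| = t$,
\[
   x^\alpha = \sum_{s=1}^t \sum_{i=1}^m f^\alpha_{s,i}\, \mu^s_{k,d}(E_{x,y^\alpha_{s,i}}) .
\]
Taking expectations and inserting $\mu^s_{k,d}(E_{x,y}) = \int_{\mathcal{G}_{k,d}} \langle P, E_{x,y}\rangle^s d\sigma_{k,d}(P)$, the next step would be to replace each integral by the cubature sum $\sum_j \omega_j \langle P_j, E_{X,y}\rangle^s$. Because the integrand $P \mapsto \langle P, E_{x,y}\rangle^s$ lies in $\Pol_s(\mathcal{G}_{k,d}) \subseteq \Pol_t(\mathcal{G}_{k,d})$ for every $s \le t$, the approximate cubature property bounds the resulting pointwise error by $\epsilon \,\|\langle \cdot, E_{x,y}\rangle^s\|_{K_t}$. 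The key observation is that $x \mapsto \|\langle \cdot , E_{x,y}\rangle^s\|_{K_t}$ is a continuous function on the compact set $\mathbb{S}^{d-1}$ (all norms on the finite-dimensional space $\Pol_t(\mathcal{G}_{k,d})$ are equivalent, and $E_{x,y}$ depends polynomially on $x$), so it is uniformly bounded by some $C_{s,y}$. Substituting, taking expectations in $X$, and using the triangle inequality over the finitely many terms of the expansion produces the bound with $c_\alpha := \sum_{s,i}|f^\alpha_{s,i}|\,C_{s,y^\alpha_{s,i}}$. The final cosmetic step is to rewrite $\sum_j \omega_j \mathbb{E}\langle P_jX, y^\alpha_{s,i}\rangle^s$ as a linear combination of the desired $\sum_j \omega_j \mathbb{E}(P_jX)^\beta$ with $|\beta|\le t$, exactly as in the proof of Corollary \ref{cor:3}.

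For the second part, when $k=1$ and $X \in \mathbb{R}^d$, the same scheme is applied to the proof of Corollary \ref{cor:zweiter Versuch}. We first use \eqref{eq:lemma combinatorial basis} to write $X^\alpha$ as a linear combination of powers $\langle X, z\rangle^t$ with $z$ in a finite set. Proposition \ref{th:ground breaking} (with $\ell = 0$) then expands each $\langle x,z\rangle^t$ as a linear combination of $\mu^{(t-2i,i)}_{1,d}(E_{x,z}, xx^*)$. Each of these trace moments is the integral over $\mathcal{G}_{1,d}$ of $P \mapsto \langle P, E_{x,z}\rangle^{t-2i}\langle P, xx^*\rangle^{i}$, a function in $\Pol_t(\mathcal{G}_{1,d})$ by Proposition \ref{prop:to compare with} together with $\Pol^3_t = \Pol_t$ for $k=1$ (Proposition \ref{prop:3.1}). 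Applying the $\epsilon$-approximate cubature yields a pointwise error bounded by $\epsilon$ times the $K_t$-norm of that function.

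The main technical point — which is also the place I expect the only real work — is to control this $K_t$-norm uniformly in $x\in\mathbb R^d$. Here the homogeneity structure saves us: $P \mapsto \langle P,E_{x,z}\rangle^{t-2i}\langle P,xx^*\rangle^{i}$ is homogeneous of total degree $(t-2i) + 2i = t$ in $x$, and depends polynomially (hence continuously) on $x$. Restricting first to $x \in \mathbb{S}^{d-1}$ and using compactness plus finite-dimensionality of $\Pol_t(\mathcal{G}_{1,d})$ gives a uniform bound, and homogeneity then upgrades it to $C\,\|x\|^t$ for all $x \in \mathbb{R}^d$. Passing through the expectation turns this into the estimate $\epsilon\, c'_\alpha\, \mathbb{E}\|X\|^t$, and summing over the finite set of $z$'s and indices $i$ absorbs all constants into a single $c'_\alpha$. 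This completes the plan.
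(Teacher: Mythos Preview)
Your proposal is correct and follows essentially the same route as the paper. The only cosmetic difference is that the paper bundles the expansion from Theorem~\ref{th:fundamental} into a single function $F^\alpha_x = \sum_{s=1}^t \sum_{i=1}^m f^\alpha_{s,i} \langle \cdot,E_{x,y^\alpha_{s,i}}\rangle^s \in \Pol_t(\mathcal{G}_{k,d})$, applies the $\epsilon$-approximate cubature once, and sets $c_\alpha := \sup_{x\in\mathbb{S}^{d-1}} \|F^\alpha_x\|_{K_t}$, whereas you apply the approximate cubature term by term and sum the errors; both yield a valid constant. For the second part the paper simply says the argument is analogous, and your detailed sketch (homogeneity in $x$ of total degree $t$, compactness on $\mathbb{S}^{d-1}$, then scaling to $\mathbb{R}^d$) is exactly the intended analogy.
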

The above theorem verifies that the cubature error propagates in a linear fashion when it comes to the moment reconstruction formulas. It should be mentioned though that the constant $c_\alpha$ depends on $k$ and $d$.
\begin{proof}
According to Theorem \ref{th:fundamental}, we derive, for $x\in \mathbb{S}^{d-1}$
\begin{align*}
x^\alpha &= \sum_{s=1}^t \sum_{i=1}^m f_{s,i}\mu^s_{k,d}(E_{x,y_i})\\
& = \sum_{s=1}^t \sum_{i=1}^m f_{s,i}\int_{\mathcal{G}_{k,d}} \langle P,E_{x,y_i}\rangle^s d\sigma_{k,d}(P)
\end{align*}
Since the function $F^\alpha_x = \sum_{s=1}^t \sum_{i=1}^m f^\alpha_{s,i} \langle \cdot,E_{x,y^\alpha_{s,i}}\rangle^s$ is an element in $\Pol_t(\mathcal{G}_{k,d})$, the cubature property yields
\begin{equation*}
\big| x^\alpha - \sum_{s=1}^t \sum_{i=1}^m f^\alpha_{s,i}\sum_{j=1}^n \omega_j \langle P_j,E_{x,y^\alpha_{s,i}}\rangle^s \big| \leq \epsilon \|F_x\|_{K_t}.
\end{equation*}
The coefficients $a^\alpha_\beta$ in Corollary \ref{cor:3} are used with $c_\alpha:=\sup_{x\in \mathbb{S}^{d-1}} \|F^\alpha_x\|_{K_t}$ to derive \eqref{eq:eps error}. 

The second part of the theorem can be verified in an analogous fashion, so we omit the details. 
\end{proof}

\section{Concluding remarks}
Our results appear to match reasonable characteristics in distributed sensing. We require a rather large set of sensors (projectors) and we assume that the high-dimensional signal is modeled by means of a probability distribution. The sensors are deterministic and can even be given by the experimental setup as long as we are able to find weights such that projectors and weights altogether form a cubature. Each sensor must reconstruct the first few moments of the projection marginal distribution, which may allow in practice for fewer data samples than for estimating the marginal distribution itself. 
In the end, the first few moments of the high-dimensional random signal can be computed with low costs by a closed formula.

As far as we know, the present paper is a first attempt to address this type of moment recovery problem with tools from harmonic analysis. Further investigations are necessary to combine those ideas with proper statistical estimation techniques, in which the low-dimensional moments are estimated from acquired data. This is intended to be addressed in forthcoming work.

\section*{Acknowledgments}
M.~E.~ and M.~G.~have been funded by the Vienna Science and Technology Fund (WWTF) through project VRG12-009.
B.~G.~B. has been supported by NSF grant DMS-1412524. M.~E.~would like to thank Christian Krattenthaler and Herwig Hauser for providing references on irreducibility for Section \ref{sec:random negative weights}.


\providecommand{\bysame}{\leavevmode\hbox to3em{\hrulefill}\thinspace}
\providecommand{\MR}{\relax\ifhmode\unskip\space\fi MR }
\providecommand{\MRhref}[2]{%
  \href{http://www.ams.org/mathscinet-getitem?mr=#1}{#2}
}
\providecommand{\href}[2]{#2}

%
%

\end{document}